\newtheorem{theorem}{Theorem}
\newtheorem{lemma}[theorem]{Lemma}
\newtheorem{prop}[theorem]{Proposition}
\newtheorem{cor}[theorem]{Corollary}
\theoremstyle{remark}
\newtheorem{rem}[theorem]{Remark}
\numberwithin{equation}{section}
\numberwithin{theorem}{section}
\def \RR {\mathbb{R}}
\def \NN {\mathbb{N}}
\def \veps {\varepsilon}
\def\({\left(}
\def\){\right)}
\def\ls{\lesssim}
\begin{document}
\begin{frontmatter}

\title{Pointwise estimates\\ for solutions of  fractal Burgers equation\tnoteref{grant}}
\tnotetext[grant]{The research was partially supported by grant MNiSW IP2012 018472 and by NTU Tier 1 Grant RG19/12.}

\author[1]{Tomasz Jakubowski\corref{cor}}
\author[2,3]{Grzegorz Serafin}
\cortext[cor]{Corresponding author}

\address[1]{
Wroc{\l}aw University of Technology\\
ul. Wybrze{\.z}e Wyspia{\'n}skiego 27, 
50-370\\
Wroc{\l}aw, Poland\\
E-mail: tomasz.jakubowski@pwr.edu.pl\\\ 
}

\address[2]{
Wroc{\l}aw University of Technology\\
ul. Wybrze{\.z}e Wyspia{\'n}skiego 27, 
50-370\\
Wroc{\l}aw, Poland\\
E-mail: grzegorz.serafin@pwr.edu.pl\\\ 
}

\address[3]{
Division of Mathematical Sciences\\
School of Physical and Mathematical Sciences\\
Nanyang Technological University\\
21 Nanyang Link\\
Singapore 637371\\
grzegorz.serafin@pmail.ntu.edu.sg
}

\begin{abstract}
In this paper, we provide two-sided estimates and uniform asymptotics for  the solution of $d$-dimensional critical fractal Burgers equation $u_t-\Delta^{\alpha/2}u+b\cdot \nabla\(u|u|^q\)=0$, $\alpha\in(1,2)$, $b\in\RR^d$ for $q = (\alpha-1)/d$ and $u_0 \in L^1(\RR^d)$. We consider also $q > (\alpha-1)/d$ under additional condition $u_0 \in L^\infty(\RR^d)$. In both cases we assume $u_0\geq0$, which implies that the solution is non-negative. The estimates are given in the terms of the function $P_t u_0$, where $P_t$ is the stable semigroup operator.
\end{abstract}

\begin{keyword}
generalized Burgers equation, fractional Laplacian, estimates of solutions, asymptotics of solutions
\MSC[2010] 35B40, 35K55, 35S10
\end{keyword}

\end{frontmatter}

\section{Introduction}
Let $d\in\NN$, $\alpha\in(1,2)$ and $q_0 = (\alpha-1)/d$. The goal of the paper is to describe estimates and asymptotics of  solutions of the  fractal Burgers equation
\begin{equation}\label{eq:problem}
\left\{\begin{array}{l}u_t-\Delta^{\alpha/2}u+b\cdot \nabla\(u|u|^q\)=0,\\u(0,x)=u_0(x),\end{array}\right.
\end{equation}
where $q\ge q_0$ and $b\in\RR^d$ is a constant vector. We assume that $u_0\in L^1$  and $u_0\ge0$, cf. (\ref{con1}), (\ref{con2}). Then, the solution $u(t,x)$ is also non-negative and the absolute value in $(\ref{eq:problem})$ may be omitted. Furthermore, the pseudo-differential operator $\Delta^{\alpha/2}$  is the fractional Laplacian defined by the Fourier transform
$$
\widehat{\Delta^{\alpha/2} \phi}(\xi)=-|\xi|^{\alpha}\widehat\phi(\xi),\ \ \ \ \phi\in C_c^\infty(\RR^d).
$$
We denote the heat kernel related to this operator by $p(t,x)$. It is the fundamental solution of 
\begin{equation}\label{eq:hk}v_t=\Delta^{\alpha/2}v.\end{equation}
The corresponding semigroup operator  $P_t$ is given by
$$
P_t f(x) = \int_{\RR^d} p(t,x-y) f(y) dy.
$$
Linear and nonlinear gradient perturbations of fractional Laplacian have been intensely studied in recent years, e.g. \cite{J, JSz, Sz, BJ2, MM, ChKS2, S, CC, CV}. Equation (\ref{eq:problem})  was recently investigated in \cite{BFW, BKW1, BKW2, BK} for various values of $q$ and initial conditions $u_0$. For $d=1$, the case $q=2$ is of particular interest (see e.g. \cite{KMX, AIK, KNS, WW}) because it is a natural counterpart of the classical Burgers equation.
In \cite{BKW1} the authors studied the solution of \eqref{eq:problem} for $q=q_0$ and $u_0 = M \delta_0$, where $\delta_0$ is the Dirac measure at $0$ and $M>0$ is some constant. They showed  the existence of the solution $U_M(t,x)$ and its basic properties. In \cite{BK} pointwise estimates of $U_M(t,x)$ where derived for small values \mbox{of $M$}. More precisely, it was proved that for sufficiently small $M$,
$$
0 \le U_M(t,x) \le c \,p(t,x), \qquad t>0, x \in \RR^d,
$$  
for some constant $c>0$. This result was improved in the recent paper \cite{JS}. The authors showed that for every $M>0$, there is a constant $c>0$ such that the following estimates hold
$$
c^{-1} p(t,x)\le  U_M(t,x) \le c\, p(t,x), \qquad t>0, x \in \RR^d.
$$  
The aim of this paper it to obtain  similar results for $u_0$ satisfying either of the following conditions, which depend on the value of $q$:
\begin{align}
	\bullet\ & u_0\in L^1(\RR^d),\  u_0\ge0,& & \mbox{for $q=q_0$}, \label{con1} \\
	 \bullet\  & u_0\in L^1(\RR^d)\cap L^\infty(\RR^d),\ u_0\ge0,&&  \mbox{for $q>q_0$}. \label{con2}
\end{align}
Additionally, we  assume throughout the paper that  $\|u_0\|_1=M>0$. The value $q_0$ is called a critical exponent. In this case linear and non-linear operators  are balanced, whereas  the fractional Laplacian is dominating for $q>q_0$. More precisely, the large time behaviour of the solution for $q>q_0$ coincides with  behaviour of  the solution $\(P_tu_0\)(x)$ of (\ref{eq:hk}) (\cite{BKW2},  Theorem 4.1)
\begin{equation}\label{eq:u-P}
\lim_{t\rightarrow\infty}t^{n(1-1/p)/\alpha}\left\|u(t,\cdot)-\(P_tu_0\)(\cdot)\right\|_p=0,\ \ \ \ \text {for each } p\in[1,\infty].
\end{equation}  
On the other hand, for $q=q_0$ the large time behaviour of the solution of (\ref{eq:problem}) is governed by the self-similar fundamental solution $
U_M(x,t)$:
\begin{equation}\label{eq:u-U}
\lim_{t\rightarrow\infty}t^{n(1-1/p)/\alpha}\left\|u(t,\cdot)-U_M(t,\cdot)\right\|_p=0,\ \ \ \ \text {for each } p\in[1,\infty],
\end{equation} 
whith $M=\left\|u_0\right\|_1$  (\cite{BKW1}, Theorem 2.2). Analogous results hold for $\alpha=2$ (\cite{EZ}, see also \cite{EVZ1}, \cite{EVZ2} for related problems). In the paper, we improve \eqref{eq:u-P} and provide some other asymptotics. However, our main result is as follows. 

\begin{theorem}\label{thm:main}
Let one of the conditions \eqref{con1}, \eqref{con2} holds. Then, the solution $u(t,x)$ of (\ref{eq:problem}) satisfies
$$\frac1C\(P_tu_0\)(x)\le u(t,x)\le C\(P_tu_0\)(x)$$
for some $C=C(d,\alpha,u_0)>1$. 
\end{theorem}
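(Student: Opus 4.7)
The approach is to work with the mild (Duhamel) formulation of \eqref{eq:problem}. Since $u\ge 0$, integration by parts transfers the gradient onto the heat kernel and gives
$$
u(t,x)\ =\ P_tu_0(x)\ -\ \int_0^t\!\int_{\RR^d} b\cdot \nabla_x p(t-s,x-y)\,u(s,y)^{q+1}\,dy\,ds,
$$
so the theorem reduces to dominating the nonlinear term pointwise by a constant multiple of $P_tu_0(x)$. Three ingredients drive the analysis: the pointwise gradient bound $|\nabla_x p(t,x)|\ls t^{-1/\alpha}p(t,x)$ for the stable kernel; the semigroup identity $p(t-s,\cdot)\ast P_su_0=P_tu_0$; and the a priori $L^\infty$-decay $\|u(s)\|_\infty\ls s^{-d/\alpha}$, available under \eqref{con1} from the analysis of \cite{BKW1} and under \eqref{con2} by interpolating $\|u(s)\|_1\le M$ with $\|u(s)\|_\infty\le\|u_0\|_\infty$.

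For the upper bound I would bootstrap the quantity $\phi(t)=\sup_x u(t,x)/P_tu_0(x)$. The inequality $u(s,y)^{q+1}\le \|u(s)\|_\infty^q\,u(s,y)\ls s^{-dq/\alpha}\,\phi(s)\,P_su_0(y)$, plugged into the Duhamel formula together with the gradient bound and the semigroup identity, produces the linear fractional integral inequality
$$
\phi(t)\ \le\ 1+c\int_0^t (t-s)^{-1/\alpha}\,s^{-dq/\alpha}\,\phi(s)\,ds.
$$
When $q>q_0$ the exponent $1-1/\alpha-dq/\alpha$ is strictly negative and the kernel decays, so standard Gronwall-type iteration closes the bound on $\phi$. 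When $q=q_0$ the exponents combine to zero; the change of variables $s=t\tau$ converts the inequality into a scale-invariant one,
$$
\phi(t)\ \le\ 1+c\int_0^1(1-\tau)^{-1/\alpha}\tau^{-(\alpha-1)/\alpha}\phi(t\tau)\,d\tau,
$$
and one closes the bootstrap on the nondecreasing majorant $\widetilde\phi(t)=\sup_{s\le t}\phi(s)$, pinning down $\widetilde\phi(0^+)=1$ through the short-time estimate $\int_0^t(t-s)^{-1/\alpha}s^{-dq/\alpha}\,ds = O(t^{1-1/\alpha-dq/\alpha})$ combined with the regularity of $u$.

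The lower bound follows by feeding the upper bound $u(s,y)\le C_1 P_su_0(y)$ back into the Duhamel formula and invoking once more the gradient estimate and the semigroup identity:
$$
u(t,x)\ \ge\ P_tu_0(x)\Bigl(1-c\int_0^t(t-s)^{-1/\alpha}\|u(s)\|_\infty^q\,ds\Bigr).
$$
For $q>q_0$ the correction is $O(t^{1-1/\alpha-dq/\alpha})$ and is strictly below $1$ for $t$ large, while for $t$ small one compares $u(t,x)$ directly with $u_0(x)$ using short-time smoothness. In the critical case $q=q_0$ the bracketed quantity is a universal Beta-function constant, not automatically below $1$; to bypass this I would split the time integration and use, for the large-time region, the comparison of $u$ with $U_M$ provided by \eqref{eq:u-U} together with the sharp two-sided bound $U_M(t,\cdot)\asymp p(t,\cdot)$ from \cite{JS}, and for the short-time region a direct comparison with the initial datum. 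The main obstacle is this critical case: neither the upper- nor the lower-bound bootstrap carries an internal smallness parameter, and both must be closed by combining the scale-invariant structure with the finer long-time asymptotic \eqref{eq:u-U} and the sharp pointwise estimates of \cite{JS}.
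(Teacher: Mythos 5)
Your Duhamel set-up, the gradient bound $|\nabla p|\ls t^{-1/\alpha}p$, the semigroup identity, and the idea of bootstrapping a quotient quantity are all on the right track and match the paper's starting point. The genuine gap is exactly where you flag it: in the critical case $q=q_0$ the scale-invariant inequality
$\phi(t)\le 1+c\int_0^1(1-\tau)^{-1/\alpha}\tau^{-(\alpha-1)/\alpha}\phi(t\tau)\,d\tau$
does not close, because the Beta-function constant times $c$ has no reason to be below~$1$, and the repair you propose does not supply the missing smallness.

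Concretely, invoking \eqref{eq:u-U} and the bound $U_M(t,\cdot)\approx p(t,\cdot)$ from \cite{JS} cannot rescue the argument, because \eqref{eq:u-U} is an $L^p$ statement: the sup-norm of $u(t,\cdot)-U_M(t,\cdot)$ is $o(t^{-d/\alpha})$, which says nothing about the ratio $u/U_M$ (or $u/P_tu_0$) in the region $|x|\gg t^{1/\alpha}$ where both functions are $\ll t^{-d/\alpha}$. Worse, for $u_0$ with heavy tails (e.g.\ $u_0(x)\sim|x|^{-d-\gamma}$ with $\gamma<\alpha$) the functions $P_tu_0$ and $Mp(t,\cdot)$ are \emph{not} pointwise comparable for $|x|$ large, so no estimate comparing $u$ with $Mp$ or $U_M$ can yield $u\approx P_tu_0$. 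Your lower-bound scheme is similarly incomplete at intermediate times: the integral $\int_0^t(t-s)^{-1/\alpha}\|u(s)\|_\infty^q\,ds$ being small for $t\to0$ and $t\to\infty$ does not make $1-c\,(\ldots)$ positive uniformly, and ``direct comparison with the initial datum by short-time smoothness'' is a placeholder, not an argument.

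What the paper does differently is to manufacture the smallness parameter \emph{internally}. It works with $u^*(t,x)=t^{d/\alpha}u(t,t^{1/\alpha}x)$, and first proves (Propositions~\ref{lem:limt0} and~\ref{lem:limxinfty}) that $\|u^*(t,\cdot)\|_\infty\to0$ as $t\to0$ and as $|x|\to\infty$. Then, writing $[u^*]^{q_0+1}=[u^*]^{q_0}u^*$, the factor $[u^*]^{q_0}$ is a genuine small parameter $\eta$ in those regimes; feeding the Duhamel inequality into itself once, and using the self-reproducing integral identities of Lemma~\ref{lem:2int} (which replace your semigroup identity by a scale-invariant analogue adapted to $u^*$), closes the bootstrap as $J\le\eta P^*_tu_0+\eta J$. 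This gives $u^*\approx P^*_tu_0$ for $t$ small and for $|x|$ large (Theorems~\ref{thm:smallt} and~\ref{thm:largex}). Finally, on the remaining compact set $[t_0,\infty)\times\{|x|\le R\}$, the upper bound is just $\|u^*\|_\infty\ls1$, and the lower bound is obtained by running the Duhamel estimate for the solution $u_\veps$ with shrunken initial datum $\veps u_0$---so that the nonlinear term carries the extra factor $\veps^{q_0}$ and is beaten by the linear term---and invoking the comparison principle $u\ge u_\veps$. This comparison-principle step, and the decay of $\|u^*\|_\infty$ as the source of smallness, are the two ideas your outline is missing.
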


Since we do not know the exact behaviour of $u_0$, we cannot give the precise estimates of $P_t u_0$. For example, for $u_0(x) = \frac{1}{1+|x|^{d+\gamma}}$, where $\gamma \in (0,\alpha]$, $P_1u_0(x)  \approx \frac{1}{1+|x|^{d+\gamma}}$ (see, for example, \cite{ChKS1}). In order to prove Theorem \ref{thm:main}, we introduce a function $u^*(t,x) = t^{d/\alpha} u(t,t^{1/\alpha}x)$, which is very convenient to deal with. In particular, the estimates of the $L^p$ norms of $u^*(t,\cdot)$ does not depend on $t$. 
It is  worth mentioning that the methods used to prove Theorem \ref{thm:main} may also be applied in the case when $u_0 = M\delta_0$ and improve the techniques used in the paper \cite{JS}.  

The paper is organized as follows. In Section 2, we collect some properties of $p(t,x)$ and introduce Duhamel formula. In Section 3, we show some basic asymptotics of the solution $u(t,x)$ as $t \to 0$ or $|x| \to \infty$. Section 4 is devoted to prove Theorem \ref{thm:main}. Finally, in Section 5, we give the precise description of asymptotic behaviour of the function $u(t,x)$.

\section{Preliminaries}

\subsection{Notation}
For two positive functions $f,g$ we denote $f\ls g$ whenever there exists a constant $c>1$ such that $f(x)<cg(x)$ for every argument $x$.  If $f\ls g$ and $g\ls f$ we write $f\approx g$. Enumerated constants denoted by capital letters do not change in the whole paper while constant denoted by small letters may change from lemma to lemma. By $|\cdot|$ we denote the Euclidean norm in $\RR$ and $\RR^d$.

\subsection{Properties of $p(t,x)$}
 The function $p(t,x)$ was introduced as a fundamental solution of (\ref{eq:hk}). We recall that it is a kernel of the stable semigroup
$ \(P_t f\)(x) =\int_{\RR^d}p(t,x,w)f(w)dw$, where $p(t,x,w)=p(t,x-w)$.
It  may be also given by the inverse Fourier transform 
\begin{equation*}
	p(t,x) = (2\pi)^{-d} \int_{\RR^d} e^{-i x \cdot \xi} e^{-t |\xi|^{\alpha}}d\xi, \qquad t>0\,,\; x \in \RR^d\,.
\end{equation*}
As a consequence, the following scaling property holds
\begin{equation}
\label{eq:scalingp}
p(t,x)=\lambda^{d/\alpha}p(\lambda t,\lambda^{1/\alpha}x), \hspace{15mm}\lambda>0.
\end{equation}
Furthermore, estimates of both: the function and its gradient are well-known (see e.g. \cite{BJ}) and can be expressed by
\begin{align}\label{est:p}
p(t,x,y)&\approx \frac t{\(t^{1/\alpha}+|y-x|\)^{d+\alpha}},\\
\label{est:gradp}
|\nabla_yp(t,x,y)|&\approx \frac {t\, |y-x|}{\(t^{1/\alpha}+|y-x|\)^{d+2+\alpha}}.
\end{align}
In particular, we have 
\begin{equation}\label{est:gradpp}
|b\cdot\nabla_yp(t,x,y)|\ls   t^{-1/\alpha}p(t,x,y),
\end{equation}
where $b\in\RR^d$ is a constant vector.
\subsection{Duhamel formula}
One of the main tools we use in this paper is the following Duhamel formula
\begin{equation}
u(t,x) = \(P_t u_0\)(x) + \int_0^t \int_{\RR^d} p(t-s,x,z) b \cdot \nabla_z [u(s,z)]^{q+1}\,dz\,ds.
\label{eq:Duhamel1}
\end{equation}
Here, we used the fact that $u(t,x)$ is non-negative. Integrating by parts, we get
\begin{equation}
u(t,x) =  \(P_t u_0\)(x) - \int_0^t \int_{\RR^d} b \cdot \nabla_z p(t-s,x,z)  [u(s,z)]^{q+1}\,dz\,ds.
\label{eq:Duhamel2}
\end{equation}
Let us denote
\begin{equation*}
u^*(t,x)= t^{d/\alpha}u(t,t^{1/\alpha}x).
\end{equation*}
We note that $u^*(t,x)= u^t(1,x)$, where $u^\lambda(t,x) = \lambda^{d/\alpha}u(\lambda t, \lambda^{1/\alpha} x)$ is the rescaled solution, cf. (\ref{eq:scalingp}). Although the function $u^*(t,x)$ depends on time, it plays a similar role as $U_M(1,x)$ in \cite{JS}. 

Let us observe that under (\ref{con1}) or (\ref{con2}), we have
$$
u(t,x)^{q+1} = u(t,x)^{q_0+1} u(t,x)^{q-q_0} \le c\, u(t,x)^{q_0+1},  
$$
where $c=1$ in the case $q=q_0$ and $c=\sup_{t>0}\|u_0(\cdot)\|^{q-q_0}_\infty$ in the case $q>q_0$ (see formula 3.7 in \cite{BKW2}).
Now, by scaling property of $p(t,x)$ and some substitutions in the integrals, we have
\begin{align}\label{eq:intduh}
t^{d/\alpha} \int_0^t \int_{\RR^d}& b \cdot \nabla_z p(t-s,t^{1/\alpha}x,z)  [u(s,z)]^{q_0+1}\,dz\,ds\\\nonumber
&=t^{-1/\alpha} \int_0^t \int_{\RR^d} b \cdot \nabla_z p(1-s/t,x,t^{-1/\alpha}z)  [u(s,z)]^{q_0+1}\,dz\,ds\\\nonumber
&=t^{1-1/\alpha} \int_0^1 \int_{\RR^d} b \cdot \nabla_z p(1-v,x,t^{-1/\alpha}z)  [u(vt,z)]^{q_0+1}\,dz\,dv\\\nonumber
&=t^{(d+\alpha-1)/\alpha} \int_0^1 \int_{\RR^d}v^{d/\alpha}\, b \cdot \nabla_w p(1-v,x,v^{1/\alpha}w)  [u(vt,(vt)^{1/\alpha}w)]^{q_0+1}\,dw\,dv\\\nonumber
&=\alpha \int_0^1 \int_{\RR^d} b \cdot \nabla_w p(1-r^\alpha,x,rw) (rt^{1/\alpha})^{d(q_0+1)} [u(r^\alpha t,rt^{1/\alpha}w)]^{q_0+1}\,dw\,dr.
\end{align}
Finally, we get in both cases (\ref{con1}) and (\ref{con2})
\begin{align}
u^*(t,x) & \le\( P^*_t u_0\)(x) + c\, t^{d/\alpha} \int_0^t \int_{\RR^d} |b \cdot \nabla_w p(t-s,t^{1/\alpha}x,z)|  [u(s,z)]^{q_0+1}\,dz\,ds, \label{eq:duhamel*est1} \\
&=  \( P^*_t u_0\)(x) + c\,\alpha\int_0^1 \int_{\RR^d} |b \cdot \nabla_w p(1-r^\alpha,x,rw)|  [u^*(r^\alpha t,w)]^{q_0+1}\,dw\,dr, \nonumber \\
&\le  \( P^*_t u_0\)(x) + C_1\int_0^1 \int_{\RR^d} \frac{p(1-r^\alpha,x,rw)}{(1-r^\alpha)^{1/\alpha}+|x-rw|}  [u^*(r^\alpha t,w)]^{q_0+1}\,dw\,dr, \label{eq:duhamel*est2}
\end{align}
where $C_1=C_1(d,\alpha,u_0)$ and
$$ \( P^*_t u_0\)(x)=t^{d/\alpha}\(P_t u_0\)(t^{1/\alpha}x).$$
We note that $P^*_t$ is not a semigroup, we use this notation by the similarity to the definition of $u^*$.
\section{Properties of $u^*$}
The function $u^*(t,x)$ possesses some convenient properties which make it very useful to deal with. First of them is a uniform upper bound of every $L^p$-norm.

\begin{lemma}There exists $C=C(d)>0$ such that 
\begin{align}\label{est:pnormu*}
\left\|u^*(t,\cdot)\right\|_p<C\left\|u_0\right\|_1,\ \ \ t>0,\ p\in[1,\infty]. \end{align}
\end{lemma}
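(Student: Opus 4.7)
The natural approach is to pull the bound back to standard $L^p$ estimates for $u(t,\cdot)$ itself, via the change of variables that defines $u^*$. Specifically, setting $y=t^{1/\alpha}x$ in the integral gives the identity
\begin{equation*}
\|u^*(t,\cdot)\|_p = t^{d(p-1)/(\alpha p)}\|u(t,\cdot)\|_p, \qquad p\in[1,\infty],
\end{equation*}
(with the convention $d(p-1)/(\alpha p)\to d/\alpha$ as $p\to\infty$). So the lemma reduces to showing that $\|u(t,\cdot)\|_p \le C\|u_0\|_1\, t^{-d(p-1)/(\alpha p)}$ for every $p\in[1,\infty]$ and every $t>0$.

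The plan is to establish this only at the two endpoints $p=1$ and $p=\infty$, and then interpolate. For $p=1$, I would invoke mass conservation $\|u(t,\cdot)\|_1=\|u_0\|_1$: this follows by integrating the Duhamel identity \eqref{eq:Duhamel1} in $x$, noticing that $P_t$ preserves the $L^1$ norm of non-negative functions and that $\int \nabla_z[u^{q+1}]\,dz=0$ (a fact that for solutions built in \cite{BKW1,BKW2} is standard, as $u^{q+1}$ decays sufficiently at infinity). This yields $\|u^*(t,\cdot)\|_1 = \|u_0\|_1$, handling the $p=1$ case.

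For $p=\infty$, I would establish the smoothing-type estimate $\|u(t,\cdot)\|_\infty \le c\,t^{-d/\alpha}\|u_0\|_1$. This is the main technical point. Taking absolute values in \eqref{eq:Duhamel2}, using $u\ge 0$, the standard ultracontractive bound $\|P_tu_0\|_\infty\lesssim t^{-d/\alpha}\|u_0\|_1$ for the stable semigroup, and \eqref{est:gradpp} gives
\begin{equation*}
u(t,x) \lesssim t^{-d/\alpha}\|u_0\|_1 + \int_0^t(t-s)^{-1/\alpha}\|u(s,\cdot)^{q+1}\|_\infty\int_{\RR^d}p(t-s,x,z)\,dz\,ds,
\end{equation*}
which for $q\ge q_0$ yields an integral inequality of Gronwall type for $f(t):=t^{d/\alpha}\|u(t,\cdot)\|_\infty$. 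Under \eqref{con2} one has $\|u(t,\cdot)\|_\infty \le \|u_0\|_\infty$ a priori (the $L^\infty$ norm is non-increasing for this type of equation), so the argument closes immediately. Under \eqref{con1} one has $q=q_0$ so that $d(q_0+1)/\alpha = 1+d/\alpha$ makes the integrand exactly scaling critical, giving a closed inequality $f(t)\lesssim \|u_0\|_1 + (\sup_{s\le t}f(s))^{q_0+1}$, whose bootstrap (for the unique continuous solution constructed in \cite{BKW1}) yields $f(t)\lesssim \|u_0\|_1$. This proves $\|u^*(t,\cdot)\|_\infty\le C\|u_0\|_1$.

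Having the $p=1$ and $p=\infty$ bounds, the remaining $p\in(1,\infty)$ follows by the log-convexity (interpolation) inequality
\begin{equation*}
\|u^*(t,\cdot)\|_p \le \|u^*(t,\cdot)\|_1^{1/p}\,\|u^*(t,\cdot)\|_\infty^{1-1/p} \le C\|u_0\|_1,
\end{equation*}
which completes the lemma. The main obstacle is the $L^\infty$ smoothing estimate in the critical case $q=q_0$: unlike for the linear stable semigroup, the nonlinear contribution is scale-invariant, so the bootstrap must rely on the specific structure of the solutions built in \cite{BKW1} (in particular their continuity in time and smallness at $t=0^+$ relative to $P_tu_0$, which is what allows $f(t)$ to be controlled rather than blow up).
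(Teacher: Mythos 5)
Your overall strategy is exactly the paper's: pull the estimate for $u^*$ back to an $L^p$ estimate for $u$ via the change of variables, establish the endpoint bounds at $p=1$ and $p=\infty$, and interpolate. The paper, however, simply cites these endpoint bounds from \cite{BKW2} (their formula 3.14, reproduced here as \eqref{est:pnormu}), whereas you try to \emph{reprove} the $L^\infty$ smoothing estimate $\|u(t,\cdot)\|_\infty\lesssim t^{-d/\alpha}\|u_0\|_1$ from the Duhamel formula --- and that derivation has a genuine gap.

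Concretely: in the critical case $q=q_0$, after inserting $\|u(s,\cdot)\|_\infty^{q_0+1}=s^{-d(q_0+1)/\alpha}f(s)^{q_0+1}$ with $f(s)=s^{d/\alpha}\|u(s,\cdot)\|_\infty$, the time integral you must bound is
\begin{equation*}
t^{d/\alpha}\int_0^t (t-s)^{-1/\alpha}\,s^{-(d+\alpha-1)/\alpha}\,f(s)^{q_0+1}\,ds,
\end{equation*}
and since $d(q_0+1)/\alpha=(d+\alpha-1)/\alpha=1+(d-1)/\alpha\ge 1$ for every $d\ge1$, the factor $s^{-(d+\alpha-1)/\alpha}$ is \emph{not} integrable near $s=0$ (you wrote $d(q_0+1)/\alpha=1+d/\alpha$, which is also off by $1/\alpha$). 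So replacing $f(s)$ by its supremum does \emph{not} give the closed inequality $f(t)\lesssim\|u_0\|_1+(\sup_{s\le t}f(s))^{q_0+1}$ that your bootstrap rests on; the right-hand side is infinite. Appealing to ``smallness of $f$ at $t=0^+$'' is circular here, because that smallness is exactly Proposition \ref{lem:limt0}, which is proved \emph{after} and \emph{using} \eqref{est:pnormu*}. The case $q>q_0$ under \eqref{con2} fares no better: using $\|u(s,\cdot)\|_\infty\le\|u_0\|_\infty$ in the nonlinear term yields a contribution $\sim\|u_0\|_\infty^{q+1}t^{(d+\alpha-1)/\alpha}$, so $f(t)$ is \emph{not} controlled uniformly for large $t$ --- your claim that ``the argument closes immediately'' is not justified. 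The correct route to these endpoint bounds (the one used in \cite{BKW1,BKW2}) goes via energy/$L^p$ estimates exploiting the divergence structure of the nonlinearity, not a direct $L^\infty$ Gronwall iteration. Your $p=1$ step (mass conservation) and the interpolation step are fine.
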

\begin{proof}We base on the formula 3.14 in \cite{BKW2}, which implies that for every $p\in[1,\infty]$ there exists $C_{d,p}>1$ such that
\begin{equation}\label{est:pnormu}
\left\|u(t,x)\right\|_p<C_{d,p}\left\|u_0\right\|_1t^{-d(1-1/p)/\alpha}.
\end{equation}
This directly gives us   (\ref{est:pnormu*})  for $p=\infty$. For $p=1$, it is enough to substitute $x=t^{1/\alpha}w$ when computing the $L^1$ norm. For $p\in(1,\infty)$,  we use the elementary interpolation inequality and get
\begin{align*}
\left\|u^*(t,\cdot)\right\|_p\le\left\|u^*(t,\cdot)\right\|_\infty^{1-1/p} \left\|u^*(t,\cdot)\right\|_1^{1/p}\le C_{d,1}C_{d,\infty}\left\|u_0\right\|_1,
\end{align*}
which ends the proof.
\end{proof}
The next two propositions show that the function $u^*(t,x)$ decays uniformly as $t$ tends to zero or infinity.

\begin{prop}\label{lem:limt0}
Assume (\ref{con1}) or (\ref{con2}) holds. Then, we have
$$\lim_{t\rightarrow0}\left\|u^*\(t,\cdot\)\right\|_\infty=0.$$
\end{prop}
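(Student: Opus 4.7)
The plan is to handle the two hypotheses separately. Under \eqref{con2} the result is immediate: the $L^\infty$ bound $\|u(t,\cdot)\|_\infty\le\|u_0\|_\infty$ (formula 3.7 of \cite{BKW2}, invoked in the paragraph just preceding \eqref{eq:intduh}) yields
\[\|u^*(t,\cdot)\|_\infty=t^{d/\alpha}\|u(t,\cdot)\|_\infty\le t^{d/\alpha}\|u_0\|_\infty\to 0,\]
so all the work is in case \eqref{con1}.

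For \eqref{con1} the strategy is to reduce to \eqref{con2} by an approximation argument. First, I would verify that the linear part already decays: $\lim_{t\to 0^+}\|P_t^* u_0\|_\infty=0$. Given $\varepsilon>0$, by density of $L^1\cap L^\infty$ in $L^1$ (take for instance $g=u_0\mathbf 1_{\{u_0\le N\}}$ for $N$ large), write $u_0=g+h$ with $g\in L^1\cap L^\infty$ and $\|h\|_1<\varepsilon$. Then $\|P_tg\|_\infty\le\|g\|_\infty$, while \eqref{est:p} gives $\|P_th\|_\infty\le\|p(t,\cdot)\|_\infty\|h\|_1\ls t^{-d/\alpha}\varepsilon$; multiplying by $t^{d/\alpha}$ yields $\|P_t^* u_0\|_\infty\ls t^{d/\alpha}\|g\|_\infty+\varepsilon$, so $\limsup_{t\to 0^+}\|P_t^* u_0\|_\infty\ls\varepsilon$, and $\varepsilon$ is arbitrary.

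Next, with the same decomposition let $v$ denote the solution of \eqref{eq:problem} with initial datum $g$. Since $g\in L^1\cap L^\infty$, case \eqref{con2} applied to $v$ gives $\|v^*(t,\cdot)\|_\infty\to 0$ as $t\to 0^+$. The remaining ingredient is a continuous-dependence estimate of the form
\[\|u(t,\cdot)-v(t,\cdot)\|_\infty\ls t^{-d/\alpha}\|u_0-g\|_1,\]
i.e.\ the analogue of formula 3.14 in \cite{BKW2} for a difference of solutions. Granting this, multiplication by $t^{d/\alpha}$ produces $\|u^*(t,\cdot)-v^*(t,\cdot)\|_\infty\ls\varepsilon$, and consequently $\limsup_{t\to 0^+}\|u^*(t,\cdot)\|_\infty\ls\varepsilon$, which vanishes as $\varepsilon\to 0^+$.

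The hard part will be establishing that stability estimate: the equation is nonlinear, so formula 3.14 of \cite{BKW2} does not apply directly to $u-v$. The plan is to subtract the Duhamel identities \eqref{eq:Duhamel2} for $u$ and $v$, use the mean-value factorization $u^{q_0+1}-v^{q_0+1}=(q_0+1)\xi^{q_0}(u-v)$ with $\xi$ between $u$ and $v$, and combine formula 3.14 of \cite{BKW2} with the identity $q_0d=\alpha-1$ to obtain $\xi^{q_0}(s,z)\ls s^{-(\alpha-1)/\alpha}$. Writing $w=u-v$, the resulting linear Duhamel inequality for $w$ can be closed by splitting the time integral at $s=t/2$: on $(0,t/2)$ one uses $\int p(t-s,\cdot,z)|w(s,z)|\,dz\le\|p(t-s,\cdot)\|_\infty\|w(s,\cdot)\|_1\ls(t-s)^{-d/\alpha}\|u_0-g\|_1$ by the $L^1$-contraction $\|w(s,\cdot)\|_1\le\|w_0\|_1$, while on $(t/2,t)$ one bounds the same integral by $\|w(s,\cdot)\|_\infty$ and absorbs it via a Gronwall-type argument. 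This is the technical step where most of the actual work resides.
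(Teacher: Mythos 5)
Your shortcut for case \eqref{con2} is correct, and the decomposition $u_0=g+h$ with $g\in L^1\cap L^\infty$ and $\|h\|_1$ small is in spirit what the paper does (it takes $g=u_0\mathbf{1}_{\{u_0<\widetilde R\}}$). One small imprecision: for $q=q_0$ the solution $v$ with datum $g\in L^1\cap L^\infty$ is not an instance of \eqref{con2}, which requires $q>q_0$; what you are really invoking is the $L^\infty$-contraction $\|v(t,\cdot)\|_\infty\le\|g\|_\infty$ (BKW1, Lemma~3.1), valid for all admissible $q$, so the conclusion $\|v^*(t,\cdot)\|_\infty\to0$ does stand.

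The genuine gap is in closing the stability estimate with the split at $s=t/2$. Setting $w=u-v$ and $\phi(t)=t^{d/\alpha}\|w(t,\cdot)\|_\infty$, the Duhamel piece over $(t/2,t)$ is controlled, via $\xi^{q_0}(s,\cdot)\ls s^{-(\alpha-1)/\alpha}$, $\int|\nabla p(t-s,\cdot,z)|\,dz\ls(t-s)^{-1/\alpha}$, and $\|w(s,\cdot)\|_\infty\le s^{-d/\alpha}\sup\phi$, by a term of the form $c\,\sup_{s\le t}\phi(s)$ with $c$ proportional to $(q_0+1)|b|\big(C_{d,\infty}\|u_0\|_1\big)^{q_0}$ times fixed constants from \eqref{est:gradpp}, \eqref{est:pnormu}. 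There is no smallness here --- indeed $c$ grows like $\|u_0\|_1^{q_0}$ --- so the term cannot be absorbed and the Gronwall argument as written does not close. (Without any split the situation is worse: the weight $t^{d/\alpha}$ is incompatible with the $s^{-(\alpha-1)/\alpha}$ singularity at $s=0$ once $d\ge1$, so $\int p(t-s,\cdot,z)|w(s,z)|\,dz\le\|w(s,\cdot)\|_\infty$ alone cannot be used on all of $(0,t)$.) The paper avoids proving a standalone $L^1\to L^\infty$ stability estimate altogether: it first replaces $u^{q_0+1}$ by $s^{-(\alpha-1)/\alpha}u$ inside the Duhamel integral for $u$ itself, splits the time integral at $(1-\varepsilon)t$ (not $t/2$), which produces an explicit small factor $\varepsilon^{1-1/\alpha}$ on the near-endpoint piece $I_2$, and only then inserts $u=v+(u-v)$ on the bulk piece $I_1$, using $\|v(s,\cdot)\|_\infty\le\widetilde R$ and the $L^1$-contraction $\|u(s,\cdot)-v(s,\cdot)\|_1\le\|h\|_1$. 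If you wish to keep your route, the fix is the same device: split at $(1-\varepsilon)t$ so that the $(\,(1-\varepsilon)t,t\,)$ contribution carries a factor $\varepsilon^{(\alpha-1)/\alpha}$ and is absorbable for $\varepsilon$ small, while the $(0,(1-\varepsilon)t)$ piece, handled by $\|w(s,\cdot)\|_1\le\|h\|_1$ and $\|p(t-s,\cdot)\|_\infty\ls(\varepsilon t)^{-d/\alpha}$, picks up only a harmless $\varepsilon^{-(d+1)/\alpha}$.
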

\begin{proof}
Let $0<\varepsilon<1/2$. There exists $R>0$ such that
$\int_{|u_0|>R}u_0(w)dw<\varepsilon$. 
Then, using estimates (\ref{est:p}) of $p(t,x)$, we get
\begin{align*}
\( P^*_t u_0\)\(x\)&= \int_{|u_0|>R}p(1,x,wt^{-1/\alpha})u_0(w)dw+t^{d/\alpha}\int_{|u_0|\le R}p(t,xt^{1/\alpha},w)u_0(w)dw\\
&\ls \varepsilon+R\,t^{d/\alpha},
\end{align*}
which is small enough for $t$ close to zero. Now, by (\ref{est:gradpp}), the integral in (\ref{eq:duhamel*est1}) may be estimated by 
\begin{align*}
&t^{d/\alpha}\int_0^t \int_{\RR^d} \left|b \cdot \nabla_z p(t-s,xt^{1/\alpha},z) \right| [u(s,z)]^{q_0+1}\,dz\,ds\\
&\ls \int_0^t \int_{\RR^d} t^{d/\alpha}(t-s)^{-1/\alpha}p(t-s,xt^{1/\alpha},z)  [u(s,z)]^{q_0+1}\,dz\,ds\\
&=\int_0^{(1-\varepsilon) t}+ \int_{(1-\varepsilon)t}^t=I_1(t,x)+I_2(t,x).
\end{align*}
We start with estimating $I_2(t,x)$. By (\ref{est:pnormu}) with $p=\infty$, we have
\begin{align*}
I_2(t,x)&\ls t^{d/\alpha}  \int_{(1-\varepsilon)t}^{ t} (t-s)^{-1/\alpha}\int_{\RR^d} p(t-s,xt^{1/\alpha},z) ((1-\varepsilon) t)^{-d(q_0+1)/\alpha} \,dz\,ds\\
&\ls t^{-(\alpha-1)/\alpha}\int_{0}^{\varepsilon t} s^{-1/\alpha}ds=\frac{\varepsilon^{1-1/\alpha}}{1-1/\alpha}.
\end{align*}
Furthermore, the formulae (\ref{est:gradpp}) and (\ref{est:pnormu})  with $p=\infty$ imply
\begin{align*}
I_1(t,x)\ls \varepsilon^{-1/\alpha}t^{(d-1)/\alpha}\int_0^{(1-\varepsilon)t}s^{-(\alpha-1)/\alpha} \int_{\RR^d}  p(t-s,xt^{1/\alpha},z)u(s,z)\,dz\,ds.
\end{align*}
We take $\widetilde{R}>0$ such that $\int_{|u_0(x)|>\widetilde{R}}|u_0(x)|ds<\varepsilon^{(d+2)/\alpha}$. Let $v(t,x)$ be a solution of the problem (\ref{eq:problem}) with initial condition $v(0,x)=\mathbf{1}_{\{|u_0(x)|<\widetilde{R}\}}u_0(x)$. Thus,  for every $t>0$,  we obtain (\cite[Lemma 3.1]{BKW1})
\begin{align*}
\left\|v(t,\cdot)-u(t,\cdot)\right\|_1&\le\left\|v(0,\cdot)-u(0,\cdot)\right\|_1<\varepsilon^{(d+2)/\alpha},\\
\left\|v(t,\cdot)\right\|_\infty&\le\left\|v(0,\cdot)\right\|_\infty\le \widetilde{R}.
\end{align*}
Consequently, 
\begin{align*}
I_1(t,x)&\ls\varepsilon^{-1/\alpha}t^{(d-1)/\alpha}\int_0^{(1-\varepsilon)t}s^{-(\alpha-1)/\alpha} \int_{\RR^d}  p(t-s,xt^{1/\alpha},z)v(s,z)\,dz\,ds\\
&+\varepsilon^{-1/\alpha}t^{(d-1)/\alpha}\int_0^{(1-\varepsilon)t}s^{-(\alpha-1)/\alpha} \int_{\RR^d}  p(t-s,t^{1/\alpha}x,z)|u(s,z)-v(s,z)|\,dz\,ds.\\
&\ls \widetilde{R}\varepsilon^{-1/\alpha}t^{d/\alpha} +\varepsilon^{-1/\alpha}t^{(d-1)/\alpha}\int_0^{(1-\veps)t} s^{-(\alpha-1)/\alpha} \int_{\RR^d} (\veps t)^{-d/\alpha}   |u(s,z)-v(s,z)|\,dz\,ds.\\
&\ls \widetilde{R}\varepsilon^{-1/\alpha}t^{d/\alpha} +\varepsilon^{1/\alpha}.
\end{align*}
Therefore, $\lim\limits_{t \to 0} \|u^*(t,\cdot)\|_\infty \le c \veps^{1/\alpha}$ for all $\veps \in (0,1/2)$, which ends the proof.
\end{proof}

\begin{prop}\label{lem:limxinfty}
Assume (\ref{con1}) or (\ref{con2}) holds. Then, we have
$$\lim_{|x|\rightarrow\infty}\|u^*(\cdot,x)\|_\infty=0.$$
\end{prop}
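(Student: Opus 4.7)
The plan is to imitate closely the strategy of Proposition \ref{lem:limt0}. Fix $\varepsilon \in (0,1/2)$ and use Proposition \ref{lem:limt0} to pick $\delta > 0$ with $\|u^*(t,\cdot)\|_\infty < \varepsilon$ for every $t \in (0,\delta)$; this immediately disposes of the range $t < \delta$ for any $x$. For $t \ge \delta$, I apply the Duhamel inequality (\ref{eq:duhamel*est1}) and estimate the free and non-linear contributions separately.

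For the free term $P_t^* u_0(x) = \int p(1, x - t^{-1/\alpha} w) u_0(w)\,dw$, I decompose $u_0 = u_0 \mathbf{1}_{|w|\le R} + u_0 \mathbf{1}_{|w|>R}$ with $R$ such that $\int_{|w|>R} u_0 < \varepsilon$: the tail contributes at most $\|p(1,\cdot)\|_\infty \varepsilon$, while in the bulk the restriction $|t^{-1/\alpha} w| \le R\delta^{-1/\alpha}$ combined with (\ref{est:p}) yields a pointwise bound of order $M|x|^{-(d+\alpha)}$ for large $|x|$. For the non-linear term, rewritten in original variables as $t^{d/\alpha}\int_0^t (t-s)^{-1/\alpha} \int p(t-s, t^{1/\alpha} x, z)\,u(s,z)^{q_0+1}\,dz\,ds$, I split the inner integral spatially at $|z - t^{1/\alpha} x| = t^{1/\alpha}|x|/2$. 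In the far region, the bound $p \ls (t-s)(t^{1/\alpha}|x|)^{-(d+\alpha)}$ from (\ref{est:p}), together with $u(s,z)^{q_0+1} \le \|u(s,\cdot)\|_\infty^{q_0} u(s,z) \ls s^{-(\alpha-1)/\alpha} u(s,z)$, $\|u(s,\cdot)\|_1 = M$ and a Beta-type $s$-integration, yields a $t$-uniform bound of order $M|x|^{-(d+\alpha)}$. In the near region, where $|z| \ge t^{1/\alpha}|x|/2$, I further split the $s$-integral at $\delta$: on $(0,\delta)$ Proposition \ref{lem:limt0} replaces $\|u(s,\cdot)\|_\infty^{q_0}$ by $s^{-(\alpha-1)/\alpha}\varepsilon^{q_0}$ and produces a contribution of order $\varepsilon^{q_0} M$, uniformly in $t\ge 2\delta$ and $x$.

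The main obstacle is the remaining range $s \in [\delta,t]$ of the near region, which reduces to controlling the tail mass $\int_{|z|\ge t^{1/\alpha}|x|/2} u(s,z)\,dz$; this is essentially the decay we are trying to prove, so a naive $L^1$ bound is circular. To break the circularity I would copy the comparison device used for $I_1$ in the proof of Proposition \ref{lem:limt0}: pick $v_0 = u_0 \mathbf{1}_{u_0 \le \widetilde R}$ with $\|u_0 - v_0\|_1 < \varepsilon$, let $v$ denote the corresponding solution of (\ref{eq:problem}), and apply the $L^1$-contraction $\|u(s,\cdot) - v(s,\cdot)\|_1 \le \|u_0 - v_0\|_1$ to write $\int_{|z|\ge R} u(s,z)\,dz \le \int_{|z|\ge R} v(s,z)\,dz + \varepsilon$. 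The tail of $v$ is then controlled through $P_s v_0$, whose tail is of order $|x|^{-\alpha}$ by the heavy-tail behaviour of the stable kernel, plus a Duhamel term for $v$ in which the uniform bound $\|v(s,\cdot)\|_\infty \le \widetilde R$ kills the pathological $s^{-d/\alpha}$ blow-up that obstructed the direct argument, closing the estimate.
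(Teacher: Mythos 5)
The overall strategy you propose is genuinely close to the paper's: both dispose of small $t$ via Proposition~\ref{lem:limt0}, split the Duhamel integral spatially, and invoke a tail-tightness estimate for $u$ (or its rescaling $u^*$) to control the middle regime. Your far-region estimate is correct and uniform in $t$, and your treatment of the free term $P^*_tu_0$ matches the paper's. However, two points in the near-region argument are genuine gaps.

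First, the reduction of the near-region contribution for $s\in[\delta,t]$ to the tail mass $\int_{|z|\ge t^{1/\alpha}|x|/2}u(s,z)\,dz$ does not work as stated. If one pulls out $\|p(t-s,\cdot)\|_\infty\lesssim(t-s)^{-d/\alpha}$, the resulting $s$-integral
\[
t^{d/\alpha}\int_\delta^t (t-s)^{-(d+1)/\alpha}s^{-(\alpha-1)/\alpha}\,ds
\]
diverges near $s=t$, since $(d+1)/\alpha>1$ for all $d\ge 1$, $\alpha<2$. You are missing a separate cutoff near $s=t$: the paper's term $I_3$ handles the range $r\in((1-\varepsilon)^{1/\alpha},1)$, i.e.\ $s\in((1-\varepsilon)t,t)$, purely by the $L^\infty$ bound on $u^*$ and the short length of the $r$-interval, giving $I_3\lesssim\varepsilon^{1-1/\alpha}$. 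Without an analogous cutoff, the ``reduction to tail mass'' simply isn't a valid reduction.

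Second, the tightness you need — uniform smallness of $\int_{|z|\ge t^{1/\alpha}|x|/2}u(s,z)\,dz$ for $s\ge\delta$ and $|x|$ large — is quoted in the paper directly from \cite[Lemma~3.10]{BKW1}, stated in rescaled variables as $\lim_{R\to\infty}\sup_{t>T}\int_{|w|>R}u^*(t,w)\,dw=0$. Your proposed derivation from scratch, via the $L^1$-contraction comparison with $v_0=u_0\mathbf{1}_{u_0\le\widetilde R}$, does remove the $s^{-(\alpha-1)/\alpha}$ singularity, but it does not close: the Duhamel term for $v$ over $\{|z|>R_0\}$ still contains a contribution from the region $\{|y|>R_0/2\}$ which is itself a tail mass of $v$, and the bulk contribution $\int_0^s(s-\tau)\,d\tau/R_0^{1+\alpha}\lesssim s^2/R_0^{1+\alpha}$ is \emph{not} uniformly small in $t$ when $s\le t$ and $R_0=t^{1/\alpha}|x|/2$, because $s^2/R_0^{1+\alpha}\lesssim t^{(\alpha-1)/\alpha}|x|^{-(1+\alpha)}$ grows with $t$. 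Some additional device (a self-improving/Gronwall argument, or working in the rescaled variables $u^*$ where the radius becomes $|x|$-independent) is needed, and you haven't supplied it. The paper avoids this entirely by citing the known tightness lemma and performing the whole computation in rescaled coordinates, which makes the three regimes ($r$ small, $r$ near $1$, and $r$ in the middle with $|w|\gtrless R$) come apart cleanly.
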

\begin{proof}
Let $0<\varepsilon<1/2$. By Proposition \ref{lem:limt0}, there exists $t_0>0$ such that $\|u^*(t,\cdot)\|_\infty<\varepsilon$ for $t\le t_0$. Therefore, we have to consider only  $t>t_0$. We will show that both terms in (\ref{eq:duhamel*est2}) tends uniformly to zero as $t\rightarrow0$. Since $u_0\in L^1$, there is a radius $R>0$ such that $\int_{|x|>R}u_0(x)dx<\varepsilon$. Then, by (\ref{est:p}), we get  for $|x|>R/t_0^\alpha$ 
\begin{align*}
\( P^*_t u_0\)(x)&=\int_{|w|>R}+\int_{|w|\leq R}p(1,x,t^{-1/\alpha}w)u_0(w)dw\\[10pt]
&\ls \varepsilon+\frac{\left\|u_0\right\|_1}{\(|x|-R/t^{1/\alpha}_0\)^{d+\alpha}},
\end{align*} 
which is small   for large $|x|$.  In order to estimate the integral in (\ref{eq:duhamel*est2}) we divide it as follows
\begin{align*}
&\int_0^1\int_{\RR^d}\frac{p(1-r^\alpha,x,rw)}{(1-r^\alpha)^{1/\alpha}+|x-wr|}[u^*(r^\alpha t,w)]^{q_0+1}dw\,dr\\
&=\int_0^\varepsilon+\int_\varepsilon^{(1-\varepsilon)^{1/\alpha}}+\int_{(1-\varepsilon)^{1/\alpha}}^1:=I_1+I_2+I_3.
\end{align*}
 Applying (\ref{est:p}) and (\ref{est:pnormu*}) for $p=1+q_0$, we obtain
\begin{align*}
I_1&\ls (1-\varepsilon^\alpha)^{-(d+1)/\alpha}\int_0^\varepsilon\int_{\RR^d}[u^*(r^\alpha t,w)]^{q_0+1}dw\,dr\\
&\ls \varepsilon\|u_0\|_1^{q_0+1}.
\end{align*}
Next, by (\ref{est:pnormu*}) for $p=\infty$, 
\begin{align*}
I_3&\ls\|u_0\|_1^{q_0+1}\int_{(1-\varepsilon)^{1/\alpha}}^1(1-r^\alpha)^{-1/\alpha}\int_{\RR^d}p(1-r^\alpha,x,rw)dw\,dr\\
&=\|u_0\|_1^{q_0+1}\int_{(1-\varepsilon)^{1/\alpha}}^1(1-r^\alpha)^{-1/\alpha}r^{-d}\,dr\\
&=\frac1\alpha\|u_0\|_1^{q_0+1}\int_0^{\varepsilon}s^{-1/\alpha}(1-s)^{1/\alpha-d-1}\,ds\\
&\ls \|u_0\|_1^{q_0+1} \varepsilon^{1-1/\alpha}.
\end{align*}
Now we are going to  deal with the integral $I_2$. By \cite[Lemma 3.10]{BKW1}, we have
\begin{equation*}
\lim_{R\rightarrow\infty}\sup_{t>T}\int_{|x|>R}u^*(t,x)dx=0, \qquad \mbox{for every $T>0$.}
\end{equation*}
Hence, there exists $R>0$ such that $\int_{|w|>R}u^*(s,w)dw<\varepsilon^{(d+1+\alpha)/\alpha}$ for every $s>\varepsilon^{1/\alpha} t_0$. Thus, for $|x|>R$, we get
 \begin{align*}
I_2&\ls  \|u_0\|_1^{q_0}\int_{(\varepsilon,(1-\varepsilon)^{1/\alpha})}\int_{|w|> R} (1-r^\alpha)^{-(d+1)/\alpha} u^*(r^\alpha t,w)dw\,dr\\
&+\int_{(\varepsilon,(1-\varepsilon)^{1/\alpha})}\int_{|w|\le R} \frac1{(x-rw)^{d+1+\alpha}} [u^*(r^\alpha t,w)]^{1+q_0}dw\,dr\\
&\ls  \|u_0\|_1^{q_0}\varepsilon^{-(d+1)/\alpha}\int_0^1\int_{|w|> R} u^*(r^\alpha t,w)dw\,dr
+\int_0^1\int_{\RR^d}\frac{[u^*(r^\alpha t,w)]^{1+q_0}}{(|x|-R)^{d+1+\alpha}}dw\,dr\\
&\ls \varepsilon\|u_0\|_1^{q_0}+\frac{\|u_0\|_1^{1+q_0}}{(|x|-R)^{d+1+\alpha}},
\end{align*}
which is small enough for sufficiently large $|x|$. This ends the proof.
\end{proof}
\begin{rem}
Using the same methodology, a noticeably simpler proof of Proposition 3.2 in \cite{JS} may be obtained.
\end{rem}

\section{Main results}
The main goal of this section is to prove Theorem  \ref{thm:main}. Additionally,  we present  some asymptotics of the function $u^*(t,x)$, which  play a crucial role in proof of the main theorem. Nevertheless, they are also interesting as separate results, which is discussed  in Section 5, where asymptotics of $u(t,x)$ are studied. 

To shorten notation, we denote for $\beta\in[0,1)$
\begin{align}\label{def:pb}
	h_\beta(r,x,w)=  r^{-\beta}(1-r^\alpha)^{-1/\alpha} p(1-r^\alpha,x,rw). 
\end{align}
The below-given technical lemma is intensively exploit  in proofs of Theorems \ref{thm:smallt} and \ref{thm:largex}.
\begin{lemma}\label{lem:2int}
Let $\beta\in(0,1)$ and $f:\RR^d\rightarrow[0,\infty)$, $g:[0,\infty)\times\RR^d\rightarrow[0,\infty)$ be such that integrals in \eqref{eq:intpP} and \eqref{eq:intppu} converge. There exist $C_2=C_2(d,\alpha, \beta)$ and $C_3=C_3(d,\alpha, \beta)$ such that
\begin{align}
(i)\ \ \int_0^1& \int_{\RR^d} h_\beta(r,x,w)  \( P^*_{r^\alpha t} f\)(w)\,dw\,dr=C_2\( P^*_{t}f\)(x), \label{eq:intpP}\\
(ii)\ \ \int_0^1& \int_{\RR^d} h_\beta(r,x,w)  \int_0^1   h_0(s,w,z)g(s^\alpha r^\alpha t,z) \,ds\,dw\,dr  \label{eq:intppu}\\
&\hspace{40mm}<C_3\int_0^1h_\beta(r,x,z)g(r^\alpha t,z)\,dr, \notag
\end{align}
where $t>0$, $z\in\RR^d$.
\end{lemma}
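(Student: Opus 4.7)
The plan is to attack both parts through the scaling identity \eqref{eq:scalingp} together with the semigroup property $P_s P_t=P_{s+t}$ of the stable kernel.

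For (i), I would first write $\(P^*_{r^\alpha t}f\)(w)$ as an integral of $f$ against $p(r^\alpha t,\cdot)$ and perform the substitutions $u=rw$ and $v=t^{1/\alpha}u$ in the $w$-integral. Using \eqref{eq:scalingp} to rescale $p(1-r^\alpha,x-t^{-1/\alpha}v)$ to a kernel at time $t(1-r^\alpha)$, Chapman--Kolmogorov produces $\(P_{t(1-r^\alpha)}P_{r^\alpha t}f\)(t^{1/\alpha}x)=\(P_tf\)(t^{1/\alpha}x)$ inside the $r$-integral. All $r$-dependence then drops out of the inner integral and (i) follows with
\begin{align*}
C_2=\int_0^1 r^{-\beta}(1-r^\alpha)^{-1/\alpha}\,dr,
\end{align*}
a Beta-type constant, finite for $\beta\in(0,1)$, $\alpha\in(1,2)$.

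For (ii), the opening move is parallel: compute the inner $w$-integral $\int_{\RR^d}h_\beta(r,x,w)h_0(s,w,z)\,dw$ by the substitution $u=rw$ and rescaling of the second heat kernel. Chapman--Kolmogorov then reduces it to
\begin{align*}
r^{-\beta}(1-r^\alpha)^{-1/\alpha}(1-s^\alpha)^{-1/\alpha}\,p(1-r^\alpha s^\alpha,x-srz).
\end{align*}
Next I would perform the change of variables $u=rs$ (Jacobian $r^{-1}$, $u\in(0,r)$) and swap the order of integration, so that the left-hand side of \eqref{eq:intppu} becomes $\int_0^1 p(1-u^\alpha,x-uz)\,g(u^\alpha t,z)\,J(u)\,du$ with
\begin{align*}
J(u)=\int_u^1 r^{-\beta}(1-r^\alpha)^{-1/\alpha}(r^\alpha-u^\alpha)^{-1/\alpha}\,dr.
\end{align*}
Comparing with the right-hand side of \eqref{eq:intppu}, the whole claim reduces to the one-variable bound $J(u)\le C_3\,u^{-\beta}(1-u^\alpha)^{-1/\alpha}$.

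This last estimate is where I expect the main work. I would split at $u=1/4$. For $u\le 1/4$, the piece $r\in(u,1/2)$ is handled by the substitution $\tau=r/u$, which factors out $u^{-\beta}$ in front of $\int_1^{1/(2u)}\tau^{-\beta}(\tau^\alpha-1)^{-1/\alpha}\,d\tau$; this is majorized by $\int_1^\infty\tau^{-\beta}(\tau^\alpha-1)^{-1/\alpha}\,d\tau$, which is finite since $\alpha>1$ (integrability at $\tau=1$) and $\beta>0$ (integrability at infinity). The piece $r\in(1/2,1)$ is bounded by a constant because $r^\alpha-u^\alpha$ is bounded below. For $u\in[1/4,1)$ the factor $r^{-\beta}$ is bounded and the substitution $r^\alpha=u^\alpha+t(1-u^\alpha)$ converts the integral into $(1-u^\alpha)^{1-2/\alpha}$ times a finite Beta integral; since $\alpha>1$, the excess factor $(1-u^\alpha)^{1-1/\alpha}$ is at most $1$, matching the target $(1-u^\alpha)^{-1/\alpha}$. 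The delicate point is tracking the Beta-type singularities simultaneously at both endpoints $r=u$ and $r=1$ of the integral defining $J(u)$, so that the two regimes of $u$ glue into a single uniform bound.
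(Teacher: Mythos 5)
Your proposal is correct and its skeleton coincides with the paper's proof: both parts hinge on the scaling/Chapman--Kolmogorov identity
\[
\int_{\RR^d} p(s,x,\beta w)\,p(t,\gamma w,z)\,dw = p(\gamma^\alpha s+\beta^\alpha t,\gamma x,\beta z),
\]
which for (i) collapses the inner $w$-integral to $\(P^*_t f\)(x)$ (independently of $r$) so that $C_2=\int_0^1 r^{-\beta}(1-r^\alpha)^{-1/\alpha}\,dr$, and for (ii) reduces the $w$-integral, after which your substitution $u=rs$ and Fubini produce exactly the paper's intermediate form $\int_0^1 p(1-u^\alpha,x,uz)\,g(u^\alpha t,z)\,J(u)\,du$ with $J(u)=\int_u^1 r^{-\beta}(1-r^\alpha)^{-1/\alpha}(r^\alpha-u^\alpha)^{-1/\alpha}\,dr$. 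The one place where you genuinely diverge from the paper is the final one-variable estimate $J(u)\ls u^{-\beta}(1-u^\alpha)^{-1/\alpha}$: the paper simply cites Corollary~4.3 of \cite{JS} (stated there with $(1-u)^{-1/\alpha}$, which is equivalent up to a factor $\alpha^{1/\alpha}$ since $(1-u)\le 1-u^\alpha\le\alpha(1-u)$ on $[0,1]$), whereas you prove it from scratch. Your split at $u=1/4$ is sound: for small $u$ the rescaling $\tau=r/u$ isolates $u^{-\beta}$ against a convergent integral (integrable at $\tau=1$ because $1/\alpha<1$, at $\infty$ because $\beta>0$), while for $u\in[1/4,1)$ the affine substitution $r^\alpha=u^\alpha+t(1-u^\alpha)$ yields $(1-u^\alpha)^{1-2/\alpha}B(1-1/\alpha,1-1/\alpha)$, dominated by $(1-u^\alpha)^{-1/\alpha}$ since the excess exponent $1-1/\alpha$ is positive, and the two regimes indeed glue because the target $u^{-\beta}(1-u^\alpha)^{-1/\alpha}$ is bounded below by $1$. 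So your argument is essentially the paper's with the external citation replaced by a correct self-contained computation.
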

\begin{proof}
We note that for any $s,t,\beta,\gamma \in (0,\infty)$ and $x,z \in \RR^d$, by scaling property (\ref{eq:scalingp})  of  $p(t,x,y)$, we get 
\begin{align}
\int_{\RR^d} p(s,x,\beta w) p(t, \gamma w, z) dw &= \int_{\RR^d} \beta^{-d}p(\beta^{-\alpha}s,\beta^{-1}x,w) \gamma^{-d}p(\gamma^{-\alpha}t,  w, \gamma^{-1}z) dw \notag\\
&=  (\beta\gamma)^{-d}p(\beta^{-\alpha}s+\gamma^{-\alpha}t,\beta^{-1}x, \gamma^{-1}z) dw \notag\\
&= p(\gamma^{\alpha}s + \beta^{\alpha}t,\gamma x, \beta z).  \label{eq:chap4}
\end{align}
Since
$$
\(P^*_s f\)(w) = \int_{\RR^d} s^{d/\alpha} p(s, s^{1/\alpha}w,z)f(z)dz 
=\int_{\RR^d}  p(1, w, s^{-1/\alpha} z)f(z)dz,
$$
 (\ref{eq:chap4}) gives us
\begin{align*}
 \int_{\RR^d}   & p(1-r^\alpha,x,rw)  \( P^*_{r^\alpha t} u_0\)(w)\,dw \\
 &= \int_{\RR^d} \int_{\RR^d} p(1-r^\alpha,x,rw)  p(1, w, (rt^{1/\alpha})^{-1} z)u_0(z)dw\,dz\\
  &=  \int_{\RR^d} p(1,x,t^{-1/\alpha}z) u_0(z) \,dz  =\(P^*_{t} u_0\)(x).
\end{align*}
Thus,
\begin{align*}
\int_0^1 \int_{\RR^d} & h_\beta(r,x,w)  \( P^*_{r^\alpha t} f\)(w)\,dw\,dr\\
   &=\( P^*_{t} u_0\)(x)\int_0^1 r^{-\beta} (1-r^\alpha)^{-1/\alpha} dr=C_2\( P^*_{t} f\)(x),
\end{align*}
which proves $(i)$. Furthermore, by (\ref{eq:chap4}), we have
\begin{align*}
\int_{\RR^d}    p(1-r^\alpha,x,rw)    p(1-s^\alpha,w,sz) \,dw = p(1-(rs)^\alpha,x,rsz).
\end{align*}
Hence, substituting $s=v/r$ in the second line, we get
\begin{align*}
\int_0^1& \int_{\RR^d}  h_\beta(r,x,w)  \int_0^1  h_0(s,w,z)g(s^\alpha r^\alpha t,z) \,ds\,dw\,dr\\
&=\int_0^1 \int_0^1 r^{-\beta}  (1-r^\alpha)^{-1/\alpha}   (1-s^\alpha)^{-1/\alpha} p(1-(rs)^\alpha,x,rsz)g(s^\alpha r^\alpha t,z) \,ds\,dr\\
& =\int_0^1 \int_0^rr^{-\beta}  (1-r^\alpha)^{-1/\alpha}   (r^\alpha-v^\alpha)^{-1/\alpha}p(1-v^\alpha,x,vz)g(v^\alpha t,z)  \,dv\,dr\\
& =\int_0^1 p(1-v^\alpha,x,vz)g(v^\alpha t,z)\int_v^1r^{-\beta}  (1-r^\alpha)^{-1/\alpha}   (r^\alpha-v^\alpha)^{-1/\alpha}  \,dr\,dv.
\end{align*}
Using the estimate (\cite{JS}, Corollary 4.3)
$$\int_v^1 r^{-\beta}(1-r^\alpha)^{-1/\alpha}(r^\alpha-v^\alpha)^{-1/\alpha}    dr\ls v^{-\beta}(1-v)^{-1/\alpha},$$
we obtain the assertion $(ii)$.
\end{proof}
Theorems  \ref{thm:smallt} and \ref{thm:largex}  show that the  distance between $u^*(t,x)$  and $P^*_tu_0$ tends to zero as $t\rightarrow0$ or $|x|\rightarrow\infty$. To avoid repeating long integrals in the proofs of those theorems,  we rewrite  \eqref{eq:duhamel*est2} as
\begin{align}\label{eq:smallt}
u^*(t,x) \le P^*_t u_0(x) + I(t,x),
\end{align}
where
\begin{align}\label{eq1:smallt}
0 \le I(t,x) \le C_1 \int_0^1 \int_{\RR^d} \frac{p(1-r^\alpha,x,rw)}{(1-r^{\alpha})^{1/\alpha}+|x-rw|}  [u^*(r^\alpha t,w)]^{q_0+1}\,dw\,dr.
\end{align}

\begin{theorem}\label{thm:smallt} Assume \eqref{con1} or \eqref{con2} holds. We have
\begin{equation}
\lim_{t\rightarrow0}\left\|\frac{u^*(t,\cdot)}{\(P^*_t f\)(\cdot)} -1 \right\|_\infty=0.
\end{equation}
\end{theorem}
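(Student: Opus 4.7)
The plan is to reduce the claim to showing $\sup_{x}I(t,x)/(P^*_tu_0)(x)\to 0$ as $t\to 0$. The upper bound \eqref{eq:smallt} gives $u^*\le P^*_tu_0+I$, while the Duhamel identity \eqref{eq:Duhamel2} (whose signed integral has modulus at most $I$) gives $u^*\ge P^*_tu_0-I$. Hence $|u^*(t,x)-(P^*_tu_0)(x)|\le I(t,x)$, and since $(P^*_tu_0)(x)>0$, division yields the desired bound on the ratio.

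Set $\eta(t):=\sup_{0<s\le t}\|u^*(s,\cdot)\|_\infty$, which tends to $0$ by Proposition \ref{lem:limt0}, and fix $\beta\in(0,1)$. In \eqref{eq1:smallt} I would bound the denominator from below by $(1-r^\alpha)^{1/\alpha}$ and use $h_0\le h_\beta$ to dominate the integrand by $h_\beta(r,x,w)[u^*(r^\alpha t,w)]^{q_0+1}$. Factoring $[u^*]^{q_0+1}\le\eta(t)^{q_0}u^*$, splitting $u^*\le P^*_{r^\alpha t}u_0+I$, and applying Lemma \ref{lem:2int}(i) gives
\begin{equation*}
I(t,x)\le C_1\eta(t)^{q_0}\bigl[C_2(P^*_tu_0)(x)+J(t,x)\bigr],\quad J(t,x):=\int_0^1\int_{\RR^d}h_\beta(r,x,w)\,I(r^\alpha t,w)\,dw\,dr.
\end{equation*}

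The heart of the argument is a self-bound on $J$. Inserting the estimate $I(s,w)\le C_1\eta(t)^{q_0}\int h_0(\rho,w,z)u^*(\rho^\alpha s,z)\,dz\,d\rho$ inside $J$ and invoking Lemma \ref{lem:2int}(ii) (with Fubini in $z$) to collapse the nested integrals, then splitting $u^*\le P^*_{r^\alpha t}u_0+I$ once more via Lemma \ref{lem:2int}(i), I arrive at
\begin{equation*}
J(t,x)\le C_1C_3\eta(t)^{q_0}\bigl[C_2(P^*_tu_0)(x)+J(t,x)\bigr].
\end{equation*}
For $t$ small enough that $C_1C_3\eta(t)^{q_0}<1/2$, this rearranges to $J(t,x)\le 2C_1C_2C_3\eta(t)^{q_0}(P^*_tu_0)(x)$; plugging back gives $I(t,x)\ls\eta(t)^{q_0}(P^*_tu_0)(x)$ uniformly in $x$, completing the proof.

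The main technical obstacle is the pointwise finiteness of $J(t,x)$ needed to rearrange the inequality above. I would handle this by first applying Lemma \ref{lem:2int}(ii) with $g=[u^*]^{q_0+1}$ to majorize $J$ by a constant times $\int_0^1\int_{\RR^d}h_\beta(r,x,z)[u^*(r^\alpha t,z)]^{q_0+1}dz\,dr$, and then splitting the $r$-integral at $r=1/2$. On $(0,1/2)$ the crude bound $\int p(1-r^\alpha,x,rz)[u^*]^{q_0+1}dz\le(1-r^\alpha)^{-d/\alpha}\|u^*\|_{q_0+1}^{q_0+1}$ combined with \eqref{est:pnormu*} reduces matters to $\int_0^{1/2}r^{-\beta}dr<\infty$; on $(1/2,1)$ the dual bound $\int p(1-r^\alpha,x,rz)[u^*]^{q_0+1}dz\le r^{-d}\eta(t)^{q_0+1}$ reduces matters to $\int_{1/2}^1(1-r)^{-1/\alpha}dr$, which is finite since $\alpha>1$.
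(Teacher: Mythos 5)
Your proposal is correct and follows essentially the same route as the paper: the same reduction $|u^*-P^*_tu_0|\le I$, the same smallness input from Proposition~\ref{lem:limt0}, the same self-referential bound built from the Duhamel formula and Lemma~\ref{lem:2int}, and the same rearrangement of a linear inequality $J\le a + \theta J$ with $\theta<1$. The one substantive thing you add is the explicit verification that $J(t,x)<\infty$ before rearranging (via the $r$-split at $1/2$ together with \eqref{est:pnormu*}); the paper relies on the convergence hypothesis of Lemma~\ref{lem:2int} without spelling it out, so your paragraph closes a small but genuine gap.
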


\begin{proof}
First, we  estimate the integral $I(t,x)$ from \eqref{eq1:smallt} as follows
$$
0 \le I(t,x)\le C_1 \int_0^1 \int_{\RR^d}  (1-r^\alpha)^{-1/\alpha}p(1-r^\alpha,x,rw)  [u^*(r^\alpha t,w)]^{q_0+1}\,dw\,dr.
$$
Let $0<\eta, \beta<1$. By Proposition \ref{lem:limt0}, we may choose $t_0$ such that 
\begin{align}\label{eq:aux0}
u^*(t,x)<\(\frac{\eta}{C_1(C_2\vee C_3)}\)^{1/q_0}, \qquad \mbox{for $t<t_0$ and $x \in \RR^d$,}
\end{align}
where $C_2$ and $C_3$ are the constants from Lemma \ref{lem:2int}. We will show that 
\begin{align}
I(t,x)\leq  \frac{\eta}{1-\eta}\( P^*_t u_0\)(x), \qquad t<t_0, \; x \in \RR^d. \label{est:Itx1}
\end{align}
Let $t<t_0$. 
Then, using notation introduced in (\ref{def:pb}),
 \begin{align}\label{eq:aux1}
I(t,x)\le  \frac{\eta}{C_2\vee C_3}\int_0^1 \int_{\RR^d}  h_\beta(r,x,w)  u^*(r^\alpha t,w)\,dw\,dr := J(t,x).
\end{align}
We note that by \eqref{eq:smallt}, we have
\begin{equation}\label{eq:aux2}
u^*(r^\alpha t,w)\leq \( P^*_{r^\alpha t} u_0\)(w)+C_1\int_0^1 \int_{\RR^d}  h_0(s,w,z)  [u^*(s^\alpha r^\alpha t,z)]^{q_0+1}\,dz\,ds.
\end{equation}
We apply \eqref{eq:aux2} to (\ref{eq:aux1}) and, by Lemma \ref{lem:2int} and \eqref{eq:aux0}, we get
\begin{align}
J(t,x) &\leq  \eta\( P^*_t u_0\)(x) + C_1 \eta\int_0^1 \int_{\RR^d}  h_\beta(r,x,z)  [u^*(r^\alpha t,z)]^{q_0+1}\,dz\,dr, \notag\\
&\leq  \eta\( P^*_t u_0\)(x) + \eta J(t,x). \label{eq:aux3}
\end{align}
Hence, $(1-\eta) J(t,x) \le \eta\( P^*_t u_0\)(x)$ and, by (\ref{eq:aux1}), we get \eqref{est:Itx1}. 

Consequently, for $t<t_0$ and $x\in \RR^d$, $\left|u^*(t,x) -P^*_t u_0(x)\right| \le \frac{\eta}{1-\eta} P^*_t u_0(x) $, which is equivalent to
$$ \left\|\frac{u^*(t,\cdot)}{\(P^*_t u_0\)(\cdot)} -1 \right\|_\infty \le \frac{\eta}{1-\eta},\ \ \ 0<t<t_0.$$
The proof is completed.
\end{proof}

Using  a similar method we get the asymptotics of $u^*(t,x)$ for $|x| \to \infty$.

\begin{theorem}\label{thm:largex}
Assume \eqref{con1} or \eqref{con2} hold. We have
\begin{equation}
\lim_{|x|\rightarrow\infty}\sup_{t>0}\left|1-\frac{u^*(t,x)}{\(P^*_t f\)(x)}\right|=0.
\end{equation}
\end{theorem}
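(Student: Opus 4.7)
The plan is to adapt the bootstrap from the proof of Theorem~\ref{thm:smallt}, using Proposition~\ref{lem:limxinfty} in place of Proposition~\ref{lem:limt0}: the smallness of $u^*$ for $|w|$ large (uniformly in $s>0$) replaces the smallness for small $s$ (uniformly in $w$). The price is that $[u^*]^{q_0}$ is no longer small on the bounded set $|w|\le R_\delta$, and this is compensated by the extra decay in $|x|$ of the sharper kernel appearing in \eqref{eq1:smallt}, together with a pointwise lower bound on $P^*_tu_0(x)$ at large $|x|$.

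By Theorem~\ref{thm:smallt} I first fix $t_0>0$ so that $|u^*(t,x)/P^*_tu_0(x)-1|\le\varepsilon$ for every $t<t_0$ and $x\in\RR^d$, reducing the problem to $t\ge t_0$. Given $\delta>0$, I pick $R=R_\delta$ from Proposition~\ref{lem:limxinfty} so that $u^*(s,w)<\delta$ for every $s>0$ and $|w|>R$, and restrict to $|x|\ge 2R$. Then I split the bound \eqref{eq1:smallt} as $I(t,x)\le I_f(t,x)+I_n(t,x)$ according to whether $|w|>R$ or $|w|\le R$ in the inner integral.

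For $I_f$, I use $[u^*(r^\alpha t,w)]^{q_0+1}\le\delta^{q_0}u^*(r^\alpha t,w)$ on $|w|>R$, drop the restriction on $w$, and run the iteration of Theorem~\ref{thm:smallt}: with $J(t,x)=\int_0^1\int_{\RR^d}h_\beta(r,x,w)u^*(r^\alpha t,w)\,dw\,dr$, I substitute $u^*\le P^*_{r^\alpha t}u_0+I$ and use Lemma~\ref{lem:2int}(i) on the linear piece and Lemma~\ref{lem:2int}(ii) on the nonlinear one. A further splitting of the resulting $\int\int h_\beta[u^*]^{q_0+1}$ at $|z|=R$ produces $\delta^{q_0}J$ on $|z|>R$ (absorbed for $\delta$ small) and, on $|z|\le R$, a term bounded via $|x-rz|\ge|x|/2$ together with \eqref{est:pnormu*} by $C\|u_0\|_1^{q_0+1}/|x|^{d+\alpha}$. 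Closing the resulting inequality yields $I_f\ls \delta^{q_0}P^*_tu_0(x)+\delta^{q_0}\|u_0\|_1^{q_0+1}/|x|^{d+\alpha}$.

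For $I_n$, the conditions $|w|\le R$ and $|x|\ge 2R$ give $|x-rw|\ge|x|/2$, so the sharper kernel in \eqref{eq1:smallt} is bounded by $\ls (1-r^\alpha)/|x|^{d+\alpha+1}$, and \eqref{est:pnormu*} then yields $I_n\ls \|u_0\|_1^{q_0+1}/|x|^{d+\alpha+1}$. Combined with the pointwise lower bound $P^*_tu_0(x)\gtrsim\|u_0\|_1/|x|^{d+\alpha}$ valid for $|x|$ large and $t\ge t_0$ (obtained from \eqref{est:p} by restricting the defining integral of $P^*_tu_0(x)$ to a ball around $0$ on which $u_0$ carries nonnegligible mass), this gives $I_n/P^*_tu_0(x)=O(1/|x|)$ and, together with the bound on $I_f$, $I(t,x)/P^*_tu_0(x)=O(\delta^{q_0})+O(1/|x|)$. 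Choosing $\delta$ small and then $|x|$ large completes the proof. The hardest part will be organising the bootstrap for $I_f$ so that the ``near'' correction of order $1/|x|^{d+\alpha}$ produced at each application of Lemma~\ref{lem:2int}, which cannot be absorbed by $P^*_tu_0$ alone, is carried through the iteration and only defeated at the very end by the extra $1/|x|$ decay coming from the sharper kernel used in the $I_n$ estimate.
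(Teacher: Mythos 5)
Your proposal follows essentially the same route as the paper's proof: split the kernel integral at $|w|=R$ using the uniform smallness of $u^*$ far out (Proposition~\ref{lem:limxinfty}), bootstrap the far part via Lemma~\ref{lem:2int} just as in Theorem~\ref{thm:smallt}, bound the near part using $|x-rw|\ge|x|/2$ and the $L^{1+q_0}$ bound \eqref{est:pnormu*}, compare against the lower bound $P^*_tu_0(x)\gtrsim |x|^{-(d+\alpha)}$ for $t$ bounded away from zero, and finally invoke Theorem~\ref{thm:smallt} to cover small $t$. The minor organizational differences (where the smallness parameter is carried, handling $t<t_0$ at the start rather than the end) do not change the argument.
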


\begin{proof}
Let $0<\eta,\beta<1$. By Proposition \ref{lem:limxinfty}  we may choose $R>0$ such that $u^*(t,x)<\(\frac{\eta}{C_1(C_2\vee C_3)}\)^{1/q_0}$ for $|x|>R$ and $t>0$. We divide  the integral $I(t,x)$ from \eqref{eq1:smallt} into $\int_0^1\int_{|w|\le R}+\int_0^1\int_{|w|> R}$  and estimate  it as follows 
 \begin{align}\label{eq:duhineq1}
I(t,x)  &\leq  C_1\int_0^1 \int_{|w|<R} \frac{p(1-r^\alpha,x,rw)}{|x-rw|}  [u^*(r^\alpha t,w)]^{1+q_0}\,dw\,dr\\\nonumber
  &\ \ +   \frac{\eta }{C_2\vee C_3}\int_0^1 \int_{|w|>R}  h_\beta(r,x,w)  u^*(r^\alpha t,w)\,dw\,dr.
\end{align}
 Similarly, we  get
 \begin{align}\nonumber
u^*(r^\alpha t,w)\leq&\, P_{r^\alpha t}^*u_0(x)+  \int_0^1 \int_{|z|\le R}    h_0(s,w,z)  [u^*(s^\alpha r^\alpha t,z)]^{1+q_0}\,dz\,ds\\\label{eq:duhineq2}
  &+ \frac{\eta}{C_2\vee C_3}\int_0^1 \int_{|z|>R}   h_0(s,w,z)  u^*(s^\alpha r^\alpha t,z)\,dz\,ds.
\end{align}
First, we will estimate the last expression in (\ref{eq:duhineq1})
\begin{equation}
J(t,x) = \frac{\eta}{C_2\vee C_3}\int_0^1 \int_{|w|>R}  h_\beta(r,x,w)  u^*(r^\alpha t,w)\,dw\,dr. \label{eq:duhineq0}
\end{equation}
We put \eqref{eq:duhineq2} into \eqref{eq:duhineq0} and, by virtue of  Lemma \ref{lem:2int}, we  get
\begin{align*}
J(t,x) & \leq  \eta\( P^*_t u_0\)(x) +\eta \int_0^1 \int_{|w|<R} h_\beta(r,x,w)  [u^*(r^\alpha t,w)]^{1+q_0}\,dw\,dr + \eta J(t,x).
\end{align*}
Hence,
\begin{align}\label{eq:duhineq3}
J(t,x) &\leq  \frac\eta{1-\eta}\( P^*_t u_0\)(x) +\frac\eta{1-\eta} \int_0^1 \int_{|w|<R} h_\beta(r,x,w)  [u^*(r^\alpha t,w)]^{1+q_0}\,dw\,dr.
\end{align}
Then, for $|x|>(2R)\vee\frac1\eta$ by (\ref{eq:duhineq1}) and (\ref{eq:duhineq3}), we get

 \begin{align*}\nonumber
I(t,x) &\leq  c_1 \int_0^1 \int_{|w|<R} \(\frac{2}{|x|}\)^{d+\alpha+1}  [u^*(r^\alpha t,w)]^{1+q_0}\,dw\,dr+   \frac\eta{1-\eta}\( P^*_t u_0\)(x)\\\nonumber
    &\ \ +\frac\eta{1-\eta}c_1 \int_0^1 \int_{|w|<R} r^{-\beta}(1-r^\alpha)^{-1/\alpha}\(\frac{2}{|x|}\)^{d+\alpha} [u^*(r^\alpha t,w)]^{1+q_0}\,dw\,dr\\\nonumber
  &\le \frac\eta{1-\eta}\( P^*_t u_0\)(x)+\frac\eta{1-\eta}\frac {c_2}{|x|^{d+\alpha}},
\end{align*}
for some $c_2=c_2(d,\alpha,\beta,u_0)>0$. The next step is to prove that  $\( P^*_t u_0\)(x)\gtrsim \frac1{|x|^{d+\alpha}}$ for large $|x|$ and $t$ bounded away from zero. There is $r_0>0$ such that $\int_{|w|<r_0}u_0(w)dw>\|u_0\|_1/2$. Let $t_0 >0$. For $t>t_0$ and $x \in \RR^d$ we get
\begin{align}\nonumber
\( P^*_t u_0\)(x)&\ge \int_{B(0,r_0)} p(1,x,t^{-1/\alpha}w)u_0(w)dw\\\label{eq:P>p}
&\gtrsim \frac{1}{(1+|x|+r_0/t_0^{1/\alpha})^{d+\alpha}} \int_{|w|<r_0}u_0(w)dw\gtrsim \frac{\|u_0\|_1}{1 \vee |x|^{d+\alpha}}.
\end{align}
Combining all together, there exists $c_3=c_3(d,\alpha,\beta,u_0)>0$ such that
 \begin{align*}\nonumber
&\left| I(t,x)\right|\le c_3\frac\eta{1-\eta}\( P^*_t u_0\)(x)
\end{align*}
holds whenever $|x|>(2R)\vee\frac1\eta$ and $t>t_0$. Consequently
$$\left|1-\frac{u^*(t,x)}{\(P^*_t u_0\)(x)}\right|<c_3\frac{\eta}{1-\eta}.$$
Now, applying Theorem \ref{thm:smallt}, we get the above inequality for $t_0=0$, which ends the proof.
\end{proof}

Finally, we are prepared to prove the main result.
\begin{proof}[Proof of Theorem \ref{thm:main}]
The equivalent statement of the theorem is
$$u^*(t,x)\approx P_t^*u_0(x),\ \ \ t>0, x \in \RR^d$$
Theorems  \ref{thm:smallt} and \ref{thm:largex} imply that there exist $R>0$ and $t_0>0$ such that the required estimates hold whenever $t\in(0,t_0)$ or $|x|>R$. What has left is to consider $(t,|x|)\in[t_0,\infty)\times[0,R]$. Observe that by (\ref{eq:P>p}) and \eqref{est:p}, we have
\begin{equation}
c_1\|u_0\|_1 \le P_t^*u_0(x) \le c_2 \|u_0\|_1, \qquad (t,|x|)\in[t_0,\infty)\times[0,R],
\label{eq:thmeq1}
\end{equation}
for some constants $c_1,c_2>0$ ($c_1$ depends on $t_0$ and  $R$). To end the proof, we have to show that $u^*(t,x)\approx1$ for $t\ge t_0$ and $|x| \le R$. The upper bound comes from (\ref{est:pnormu*}). Next, under assumptions \eqref{con1} or \eqref{con2}, by \eqref{est:pnormu}, we have
\begin{align}
& \int_0^1 \int_{\RR^d} |b \cdot \nabla_w p(1-r^\alpha,x,rw)| [u^*(r^\alpha t,w)]^{q_0+1}dw\, dr \notag\\ 
& \le c_3 \int_0^{1/2} \int_{\RR^d}  [u^*(r^\alpha t,w)]^{1+q_0}dw\, dr  + \int_{1/2}^1 \int_{\RR^d} |\nabla_w p(1-r^\alpha,x,rw)| \|u_0\|_1^{1+q_0}dw\, dr \notag\\ 
& \le c_4 \|u_0\|_1^{1+q_0}
\label{eq:thmeq2}
\end{align}
for some constant $c_4=c_4(d,\alpha)>0$.

Now, let $\veps \in (0,1)$ and let $u_\veps(t,x)$ be the solution of (\ref{eq:problem}) with the initial condition $u_\veps(0,x)=\varepsilon\, u_0(x)$. Put $u^*_\veps(t,x)=t^{d/\alpha}u_\veps(t,t^{1/\alpha}x)$. Then, we have for every $t>0$, $\left\|u^*_\veps(t,\cdot)\right\|_\infty\leq\varepsilon \left\|u_0\right\|_1$ and $\left\|u^*_\veps(t,\cdot)\right\|_1\leq\varepsilon \left\|u_0\right\|_1$. Thus, by \eqref{eq:thmeq1} and \eqref{eq:thmeq2},
\begin{align*}
u^*_\veps(t,x)&\gtrsim\varepsilon P_t^*u_0(x) -\int_0^1 \int_{\RR^d}| b \cdot \nabla_w p(1-r^\alpha,x,rw) |[u^*_\varepsilon(r^\alpha t,w)]^{1+q_0}dw\, dr\\
&\ge \varepsilon c_1 \|u_0\|_1 - \varepsilon^{1+q_0} c_4\|u_0\|_1^{1+q_0},
\end{align*}
 for $t \ge t_0$ and $|x| \le R$. Taking $\varepsilon=\(\frac{c_1}{2c_3}\)^{1/q_0} \|u_0\|_1^{-1}$, we get $u^*_\veps(t,x)\ge \varepsilon c_5 \|u_0\|_1>0$. Since solutions of (\ref{eq:problem}) preserve the order of initial conditions (see  \cite{BKW1}, Lemma 3.1), we have $u^*(t,x)>u^*_\veps(t,x)$, and the proof is complete.
\end{proof}

\section{Asymptotic behaviour of solutions}
It is easy to see, that  
\begin{equation}\label{eq:P-Mp}
\lim_{t\rightarrow\infty}t^{n(1-1/p)/\alpha}\left\|\(P_tu_0\)(\cdot)-Mp(t,\cdot)\right\|_p=0
\end{equation} 
holds for every \mbox{$p\in[1,\infty]$} and $u_0\in L^1$. Applying this to (\ref{eq:u-P}), we obtain 
$$\lim_{t\rightarrow\infty}t^{n(1-1/p)/\alpha}\left\|u(t,\cdot)-Mp(t,\cdot)\right\|_p=0.$$ 
This form of the result is presented e.g. in   \cite{EZ}, where $\alpha=2$ is considered. It seems to be more useful then (\ref{eq:u-P}), since the function $p(t,x)$ is well known and does not depend on $u_0$. Such formulation is also a more natural counterpart of (\ref{eq:u-U}). Nevertheless, it may be concluded from Theorem \ref{thm:main}  that we have to employ the function $P_tu_0$ to describe the  behaviour of $u(t,x)$ more precisely. In the sequel, we discuss asymptotics of the quotient $u(t,x)/\(P_tu_0\)(x)$. We also give another improvement of (\ref{eq:u-P}). Some results are already provided in Section 4. In particular,  Proposition \ref{thm:smallt} is equivalent to the following equality. 
\begin{cor}\label{cor:t->0}
Under (\ref{con1}) or (\ref{con2}) we have
$$\lim_{t\rightarrow0}\left\|\frac{u(t,\cdot)}{\(P_t u_0\)(\cdot)}-1\right\|_\infty=0$$
\end{cor}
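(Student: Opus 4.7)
The plan is to show that Corollary \ref{cor:t->0} is a direct translation of Theorem \ref{thm:smallt} via the scaling that defines $u^*$ and $P^*_t$. First I would unwind the definitions: by construction
$$
\frac{u^*(t,x)}{\(P^*_t u_0\)(x)} = \frac{t^{d/\alpha} u(t,t^{1/\alpha}x)}{t^{d/\alpha}\(P_t u_0\)(t^{1/\alpha}x)} = \frac{u(t,t^{1/\alpha}x)}{\(P_t u_0\)(t^{1/\alpha}x)},
$$
so the common factor $t^{d/\alpha}$ cancels and the ratio is invariant under the rescaling.

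Next, I would perform the change of variables $y = t^{1/\alpha} x$. For every fixed $t>0$ this map is a bijection of $\RR^d$ onto itself, so
$$
\sup_{x\in\RR^d}\left|\frac{u^*(t,x)}{\(P^*_t u_0\)(x)} - 1\right| \;=\; \sup_{y\in\RR^d}\left|\frac{u(t,y)}{\(P_t u_0\)(y)} - 1\right|.
$$
In other words, the two $L^\infty$ norms appearing in Theorem \ref{thm:smallt} and Corollary \ref{cor:t->0} are pointwise equal as functions of $t$.

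The corollary then follows by letting $t\to 0$ and invoking Theorem \ref{thm:smallt}, which was already established under the hypotheses (\ref{con1}) or (\ref{con2}). There is really no obstacle here; the only thing worth checking carefully is that $\(P_t u_0\)(y) > 0$ for all $y \in \RR^d$ and $t>0$, so that the quotient is well defined. This is immediate from the strict positivity of the stable kernel $p(t,\cdot)$ together with $u_0 \ge 0$ and $\|u_0\|_1 = M > 0$, since then $\(P_t u_0\)(y) = \int p(t,y-w)u_0(w)\,dw > 0$.
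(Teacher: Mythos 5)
Your proof is correct and coincides with the paper's own reasoning: the paper simply states that Theorem \ref{thm:smallt} is equivalent to the corollary, which is exactly the scaling identity $\sup_x |u^*/P^*_t u_0 - 1| = \sup_y |u/P_t u_0 - 1|$ that you spell out. The additional remark about strict positivity of $P_t u_0$ is a reasonable sanity check but does not change the argument.
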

Theorem \ref{thm:largex} could be also reformulated in language of the function $u(t,x)$, but it would lose its clear form. Additionally, a stronger and clearer result, under condition (\ref{con2}), will be given  at the end of this section. Before that, we discuss the large time behaviour of the solution of (\ref{eq:problem}) with this condition.
\begin{prop}\label{cor:t->00}
Assume (\ref{con2}) holds. For every $0 < \gamma < (d(q-q_0) \land 1)/\alpha$, we have
$$\lim_{t\rightarrow\infty}t^{\gamma}\left\|1-\frac{u(t,\cdot)}{\( P_t u_0\)(\cdot)}\right\|_\infty=0.$$
\end{prop}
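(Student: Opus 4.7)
Setting $\delta = q-q_0 > 0$ and noting that the rescaling $y=t^{1/\alpha}x$ identifies $\|1-u(t,\cdot)/(P_tu_0)(\cdot)\|_\infty$ with $\|1-u^*(t,\cdot)/P^*_t u_0(\cdot)\|_\infty$, it is enough to work with the starred objects. The plan is to first adapt the scaling computation (\ref{eq:intduh}) to general $q$, then use Theorem~\ref{thm:main} together with the semigroup identity extracted from the proof of Lemma~\ref{lem:2int}(i) to collapse the space-time integral into a pure $r$-integral, and finally read off the rate from an explicit calculation.

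Performing in (\ref{eq:Duhamel2}) the same substitutions as in (\ref{eq:intduh}) with $q_0$ replaced by $q$ yields the rescaled identity
$$u^*(t,x) = P^*_t u_0(x) - \alpha\, t^{-d\delta/\alpha}\int_0^1 r^{-d\delta}\int_{\RR^d} b\cdot\nabla_w p(1-r^\alpha,x,rw)\, [u^*(r^\alpha t,w)]^{q+1}\,dw\,dr;$$
the new ingredients compared with the critical case are the prefactor $t^{-d\delta/\alpha}$ and the weight $r^{-d\delta}$, which ceases to be integrable at $r=0$ once $d\delta\ge 1$. Applying (\ref{est:gradpp}), writing $[u^*]^{q+1}\le \|u^*\|_\infty^q u^*$, bounding $u^*(r^\alpha t,w)\ls P^*_{r^\alpha t} u_0(w)$ by Theorem~\ref{thm:main}, and invoking the identity $\int_{\RR^d} p(1-r^\alpha,x,rw)\, P^*_{r^\alpha t} u_0(w)\,dw = P^*_t u_0(x)$ from the proof of Lemma~\ref{lem:2int}(i), I would reduce matters to
$$|u^*(t,x) - P^*_t u_0(x)| \ls t^{-d\delta/\alpha} P^*_t u_0(x)\int_0^1 r^{-d\delta}(1-r^\alpha)^{-1/\alpha}\|u^*(r^\alpha t,\cdot)\|_\infty^q\,dr.$$

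At this stage the hypothesis $u_0\in L^1\cap L^\infty$ enters decisively: combining $\|u(s,\cdot)\|_\infty\le\|u_0\|_\infty$ with (\ref{est:pnormu}) gives $\|u^*(s,\cdot)\|_\infty\ls\min(s^{d/\alpha},1)$, hence $\|u^*(r^\alpha t,\cdot)\|_\infty^q\ls\min((r^\alpha t)^{dq/\alpha},1)$. I would then split the $r$-integral at $r=t^{-1/\alpha}$; using the identity $dq-d\delta=\alpha-1$, the contribution of $r\in(0,t^{-1/\alpha})$ reduces to $t^{dq/\alpha}\int_0^{t^{-1/\alpha}}r^{\alpha-1}(1-r^\alpha)^{-1/\alpha}\,dr \approx t^{(d\delta-1)/\alpha}$, while a direct calculation shows that the tail $\int_{t^{-1/\alpha}}^{1} r^{-d\delta}(1-r^\alpha)^{-1/\alpha}\,dr$ is $O(1)$, $O(\log t)$, or $O(t^{(d\delta-1)/\alpha})$ according as $d\delta<1$, $=1$, or $>1$.

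Multiplying both contributions by the prefactor $t^{-d\delta/\alpha}$ is expected to yield
$$\left\|1-\frac{u^*(t,\cdot)}{P^*_t u_0(\cdot)}\right\|_\infty \ls \begin{cases} t^{-d\delta/\alpha}, & d\delta<1,\\ t^{-1/\alpha}\log t, & d\delta=1,\\ t^{-1/\alpha}, & d\delta>1,\end{cases}$$
each of which is $o(t^{-\gamma})$ for every $\gamma<(d\delta\wedge 1)/\alpha$, finishing the proof. The main obstacle I anticipate is precisely the non-integrable weight $r^{-d\delta}$ at $r=0$ once $d(q-q_0)\ge 1$; this singularity can only be absorbed using the improved estimate $\|u^*(s,\cdot)\|_\infty\ls s^{d/\alpha}$ valid for small $s$, which in turn rests on the $L^\infty$ control of $u_0$ built into hypothesis (\ref{con2}) and explains why the proposition is formulated only under (\ref{con2}).
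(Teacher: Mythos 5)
Your proof is correct, and in fact delivers more than the proposition asserts (explicit rates in $t$). Both your argument and the paper's rest on the same two ingredients: the two-sided estimate $u\approx P_t u_0$ from Theorem~\ref{thm:main}, and the interpolation $\|u(s,\cdot)\|_\infty\lesssim \min(\|u_0\|_\infty,\,s^{-d/\alpha})$ available under hypothesis \eqref{con2}. The packaging is genuinely different, though. The paper stays in the unscaled Duhamel identity \eqref{eq:Duhamel2}, bounds $[u(s,z)]^{q}\lesssim[(P_su_0)(z)]^{q}\lesssim s^{-\gamma-\varepsilon-(\alpha-1)/\alpha}$ by smearing the $\min$ into a single power law (this is where $\gamma+\varepsilon<d\delta/\alpha$ is used), collapses the spatial integral with the Chapman--Kolmogorov identity $\int p(t-s,x,z)(P_su_0)(z)\,dz=(P_tu_0)(x)$, and is left with one beta integral $\int_0^1(1-r)^{-1/\alpha}r^{-\gamma-\varepsilon-(\alpha-1)/\alpha}\,dr$, convergent precisely because $\gamma+\varepsilon<1/\alpha$; this gives $O(t^{-\gamma-\varepsilon})$ with no splitting at all. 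You instead carry the supercritical exponent $q$ through the scaling computation \eqref{eq:intduh}, producing the extra factors $t^{-d\delta/\alpha}$ and $r^{-d\delta}$ (whose exponents I verified: $d(q+1)-(d+\alpha-1)=d\delta$), then invoke the semigroup identity from Lemma~\ref{lem:2int}(i) and split the $r$-integral at $t^{-1/\alpha}$. Your route is more computational but pins down the genuine rates $t^{-d\delta/\alpha}$, $t^{-1/\alpha}\log t$, and $t^{-1/\alpha}$ in the three regimes of $d\delta$, each $o(t^{-\gamma})$ under the stated restriction on $\gamma$; the paper's auxiliary $\varepsilon$ is a shortcut that trades the explicit rate for a shorter argument. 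You also correctly locate where the $L^\infty$ control on $u_0$ enters: it is what tames the nonintegrable weight $r^{-d\delta}$ near $r=0$, exactly as the paper's use of $(P_su_0)(z)\lesssim\|u_0\|_\infty$ for small $s$ does.
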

\begin{proof} 
There exists $\varepsilon>0$ such that $\gamma+\varepsilon< (d(q-q_0) \land 1)/\alpha$.  Additionally, using \eqref{con2}, we have 
$$\(P_su_0\)(z)  \lesssim  (s^{-d/\alpha}\|u_0\|_1) \land \|u_0\|_\infty.$$ 
Consequently, since  $q>q_0+\alpha(\gamma+\varepsilon)/d$, we get
$$[\(P_su_0\)(z)]^q \lesssim   [\(P_su_0\)(z)]	^{q_0+\frac{\alpha(\gamma+\varepsilon)}d} \lesssim  s^{-\frac{d}{\alpha}\(q_0+\frac{\alpha(\gamma+\varepsilon)}d\)} =  s^{-\gamma-\varepsilon-(\alpha-1)/\alpha}.$$
Then, by Theorem \ref{thm:main}, we obtain 
\begin{align}
&\left|\int_0^t \int_{\RR^d} b \cdot \nabla_w p(t-s,x,z)  [u(s,z)]^{q+1}\,dz\,ds\right| \notag\\
&\lesssim  \int_0^t \int_{\RR^d} (t-s)^{-1/\alpha} s^{-\gamma-\varepsilon-(\alpha-1)/\alpha} p(t-s,x,z)  \(P_su_0\)(z) \,dz\,ds \notag\\
&= t^{-\gamma-\varepsilon}\int_0^1 \int_{\RR^d} (1-r)^{-1/\alpha} r^{-\gamma-\varepsilon-(\alpha-1)/\alpha} p(t,x,w)u_0(w)\,dw\,ds \notag\\
& = c\, t^{-\gamma-\varepsilon} P_t u_0(x) \label{eq:t->00}.
\end{align}
The last integral is finite whenever $-\gamma-\varepsilon-(\alpha-1)/\alpha>-1$, which explains the importance of the assumption $\gamma+\varepsilon<1/\alpha$. Finally,  by  (\ref{eq:Duhamel2}) we arrive at 
$$ t^{\gamma}\left\|1-\frac{u(t,\cdot)}{\( P_t u_0\)(\cdot)}\right\|_\infty\lesssim t^{-\varepsilon},\ \ \ x\in\RR^d,\ t>0.$$
The proof is complete.
\end{proof}

\begin{rem} A result of that kind cannot be obtained in the case $q=q_0$, since (\ref{eq:u-U}) and (\ref{eq:P-Mp}) hold and 
$$\left\|1-\frac{U_M(t,\cdot)}{Mp(t,\cdot)}\right\|_\infty=\left\|1-\frac{U_M(1,\cdot)}{Mp(1,\cdot)}\right\|_\infty\neq0,$$
which follows from scaling properties of the functions  $p(t,x)$ and $U_M(t,x)$ (see \cite{BKW1}, Theorem 2.1). 
\end{rem}
The following result gives better large time asymptotics of $u(t,x)$ than one can obtain from \cite{BKW2} (see (\ref{eq:u-P})).
\begin{cor}
Assume (\ref{con2}) holds. For every $0 < \gamma < (d(q-q_0) \land 1)/\alpha$, we have
$$\lim_{t\rightarrow\infty}t^{\gamma + d(1-1/p)/\alpha}\left\|u(t,\cdot) - \(P_t u_0\) (\cdot) \right\|_p=0.$$
\end{cor}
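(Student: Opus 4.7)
The plan is to extract a pointwise bound from the proof of Proposition \ref{cor:t->00} and then take $L^p$ norms, using the standard $L^p$ decay of $P_tu_0$ for $u_0\in L^1$. No new hard work should be required.

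First, I would revisit the chain of inequalities culminating in \eqref{eq:t->00}. What that computation actually produces, before passing to the quotient formulation, is the pointwise estimate
$$\bigl|u(t,x) - (P_tu_0)(x)\bigr| \ls t^{-\gamma-\varepsilon}\,(P_tu_0)(x), \qquad t>0,\ x\in\RR^d,$$
valid for any $\varepsilon>0$ with $\gamma+\varepsilon < (d(q-q_0)\wedge 1)/\alpha$. This follows by combining the Duhamel identity \eqref{eq:Duhamel2} with the upper bound $u\ls P_tu_0$ from Theorem \ref{thm:main} and the same manipulation of $[(P_su_0)(z)]^q \ls s^{-\gamma-\varepsilon-(\alpha-1)/\alpha}$ used to derive \eqref{eq:t->00}.

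Next, I would take the $L^p$ norm on both sides. By Young's convolution inequality together with the scaling $\|p(t,\cdot)\|_p = t^{-d(1-1/p)/\alpha}\|p(1,\cdot)\|_p$, we have
$$\|P_tu_0\|_p \le \|p(t,\cdot)\|_p\,\|u_0\|_1 \ls t^{-d(1-1/p)/\alpha}\|u_0\|_1$$
for every $p\in[1,\infty]$. Multiplying the pointwise bound and integrating (or taking the sup for $p=\infty$) gives
$$\bigl\|u(t,\cdot) - (P_tu_0)(\cdot)\bigr\|_p \ls t^{-\gamma-\varepsilon}\,\|P_tu_0\|_p \ls t^{-\gamma-\varepsilon - d(1-1/p)/\alpha}\|u_0\|_1.$$
Multiplying through by $t^{\gamma + d(1-1/p)/\alpha}$ yields the bound $\ls t^{-\varepsilon}$, which tends to zero as $t\to\infty$.

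There is no real obstacle here, as Proposition \ref{cor:t->00} already contains the essential estimate; the only thing to watch is that the proof of that proposition really furnishes a pointwise (not merely a sup-of-quotient) inequality, which is exactly the form the Duhamel computation produces before one divides by $P_tu_0$. Everything else is the standard $L^p$-decay of the stable semigroup applied to an $L^1$ datum.
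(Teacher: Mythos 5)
Your proposal is correct and essentially identical to the paper's own proof: both extract the pointwise bound $|u(t,x)-(P_tu_0)(x)|\lesssim t^{-\gamma-\varepsilon}(P_tu_0)(x)$ from the computation leading to \eqref{eq:t->00}, then apply Young's inequality with the scaling $\|p(t,\cdot)\|_p\approx t^{-d(1-1/p)/\alpha}$ to conclude. No gaps.
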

\begin{proof}
Let, as in the proof of Proposition \ref{cor:t->00}, $\varepsilon>0$ such that $\gamma+\varepsilon< (d(q-q_0) \land 1)/\alpha$.  Then, \eqref{eq:t->00} gives us
\begin{align*}
|u(t,x) - \(P_t u_0\)(x)|  \lesssim  c t^{-\gamma-\varepsilon} \(P_tu_0\)(x)\,.
\end{align*}
By Young inequality, $\|\(P_tu_0\)(\cdot)\|_p \le \|p(t,\cdot)\|_p \|u_0\|_1 =c t^{-d(1-1/p)/\alpha}\|u\|_1$. Hence,
\begin{align*}
	\lim_{t\to\infty} t^{\gamma + d(1-1/p)/\alpha}\|u(t,\cdot) - \(P_t u_0\)(\cdot)\|_p =0.
\end{align*}
\end{proof}

Combining Theorem \ref{thm:largex} and Proposition \ref{cor:t->00}, we obtain the  uniform asymptotics for large $|x|$.
\begin{cor}
Under assumption (\ref{con2}), we have
$$
\lim_{|x|\rightarrow \infty} \sup_{t>0} \left|\frac{u(t,x)}{\(P_t u_0\)(x)}-1\right|=0.
$$
\end{cor}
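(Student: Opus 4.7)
The plan is to deduce the corollary by patching together the two complementary asymptotic regimes already established: the uniform-in-$t$ spatial-infinity decay of Theorem \ref{thm:largex}, and the uniform-in-$x$ large-time decay of Proposition \ref{cor:t->00}. First I would rewrite Theorem \ref{thm:largex} in the original variables. Since $u^*(t,x)=t^{d/\alpha}u(t,t^{1/\alpha}x)$ and $P_t^*u_0(x)=t^{d/\alpha}(P_tu_0)(t^{1/\alpha}x)$, the ratio is scale-invariant, and the substitution $y=t^{1/\alpha}x$ turns Theorem \ref{thm:largex} into the equivalent statement: for every $\eta>0$ there exists $R_0=R_0(\eta)>0$ such that
\[
\left|1-\frac{u(t,y)}{(P_tu_0)(y)}\right|<\eta \qquad \text{whenever } |y|>R_0\, t^{1/\alpha},\ t>0.
\]
This handles the regime where $t$ is small compared to a power of $|y|$; it leaves uncovered only the regime where $t$ is large.

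Proposition \ref{cor:t->00} fills precisely that gap: applied with any admissible $\gamma$, it yields $T=T(\eta)>0$ such that $\|1-u(t,\cdot)/(P_tu_0)(\cdot)\|_\infty<\eta$ whenever $t\geq T$. Setting $R=R_0\,T^{1/\alpha}$, any $|y|>R$ satisfies $|y|/t^{1/\alpha}>R_0$ as long as $t<T$, so the rescaled Theorem \ref{thm:largex} applies; and for $t\geq T$ Proposition \ref{cor:t->00} takes over. Taking the supremum over $t>0$ therefore yields $\sup_{t>0}|1-u(t,y)/(P_tu_0)(y)|\leq\eta$ for all $|y|>R$, which is exactly the uniform limit claimed.

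There is essentially no hard step here beyond correctly matching the two regimes via the scaling $y=t^{1/\alpha}x$; the corollary is a packaging of results already proved. The one point worth flagging is that Proposition \ref{cor:t->00} requires \eqref{con2} (the supercritical case), whereas Theorem \ref{thm:largex} works under either \eqref{con1} or \eqref{con2}. This asymmetry is exactly why the corollary is stated under \eqref{con2} alone: in the critical case $q=q_0$ the analogue of Proposition \ref{cor:t->00} fails, as the preceding remark about $U_M/(Mp)$ not tending to $1$ illustrates, so the combined uniform statement cannot be expected.
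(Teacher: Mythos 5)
Your proof is correct and follows exactly the same route as the paper's: rescale Theorem \ref{thm:largex} to the original variables so it controls the regime $|x|>R_0 t^{1/\alpha}$, use Proposition \ref{cor:t->00} for $t\geq T$, and glue the two by choosing $R=R_0T^{1/\alpha}$. The closing observation explaining why \eqref{con2} is needed (the critical case fails, as the remark about $U_M/(Mp)$ shows) is a nice touch but not part of the argument itself.
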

\begin{proof}
Fix $\varepsilon>0$. In view of  Proposition \ref{cor:t->00}, there exists $t_0>0$ such that  
\begin{equation}\label{eq:u/P-1}
\left|\frac{u(t,x)}{\(P_t u_0\)(x)}-1\right|<\varepsilon,
\end{equation} whenever $t>t_0$. Furthermore, by Theorem \ref{thm:largex}, there is $R$ such that (\ref{eq:u/P-1}) holds if $|x|>R\,t^{1/\alpha}$. Consequently, (\ref{eq:u/P-1}) is true for $t>0$ and $|x|>R\,t_0^{1/\alpha}$, which ends the proof.
\end{proof}

\bibliography{burgers}

\begin{thebibliography}{10}

\bibitem{AIK}
N.~Alibaud, C.~Imbert, and G.~Karch.
\newblock Asymptotic properties of entropy solutions to fractal {B}urgers
  equation.
\newblock {\em SIAM J. Math. Anal.}, 42(1):354--376, 2010.

\bibitem{BFW}
P.~Biler, T.~Funaki, and W.~A. Woyczynski.
\newblock Fractal {B}urgers equations.
\newblock {\em J. Differential Equations}, 148(1):9--46, 1998.

\bibitem{BKW2}
P.~Biler, G.~Karch, and W.~A. Woyczy{\'n}ski.
\newblock Asymptotics for conservation laws involving {L}\'evy diffusion
  generators.
\newblock {\em Studia Math.}, 148(2):171--192, 2001.

\bibitem{BKW1}
P.~Biler, G.~Karch, and W.~A. Woyczy{\'n}ski.
\newblock Critical nonlinearity exponent and self-similar asymptotics for
  {L}\'evy conservation laws.
\newblock {\em Ann. Inst. H. Poincar\'e Anal. Non Lin\'eaire}, 18(5):613--637,
  2001.

\bibitem{BJ}
K.~Bogdan and T.~Jakubowski.
\newblock Estimates of heat kernel of fractional {L}aplacian perturbed by
  gradient operators.
\newblock {\em Comm. Math. Phys.}, 271(1):179--198, 2007.

\bibitem{BJ2}
K.~Bogdan and T.~Jakubowski.
\newblock Estimates of the {G}reen function for the fractional {L}aplacian
  perturbed by gradient.
\newblock {\em Potential Anal.}, 36(3):455--481, 2012.

\bibitem{BK}
L.~Brandolese and G.~Karch.
\newblock Far field asymptotics of solutions to convection equation with
  anomalous diffusion.
\newblock {\em J. Evol. Equ.}, 8(2):307--326, 2008.

\bibitem{CV}
L.~A. Caffarelli and A.~Vasseur.
\newblock Drift diffusion equations with fractional diffusion and the
  quasi-geostrophic equation.
\newblock {\em Ann. of Math. (2)}, 171(3):1903--1930, 2010.

\bibitem{ChKS1}
Z.-Q. Chen, P.~Kim, and R.~Song.
\newblock Dirichlet heat kernel estimates for $\delta^{\alpha/2}
  +\delta^{\beta/2}$.
\newblock {\em Illinois J. Math.}, 54(4):1357--1392, 2010.

\bibitem{ChKS2}
Z.-Q. Chen, P.~Kim, and R.~Song.
\newblock Dirichlet heat kernel estimates for fractional {L}aplacian with
  gradient perturbation.
\newblock {\em Ann. Probab.}, 40(6):2483--2538, 2012.

\bibitem{CC}
A.~C\'ordoba and D.~C\'ordoba.
\newblock A maximum principle applied to quasi-geostrophic equations.
\newblock {\em Commun. Math. Phys.}, 249:511--528, 2004.

\bibitem{EVZ2}
M.~Escobedo, J.~Vazquez, and E.~Zuazua.
\newblock Asymptotic behaviour and source-type solutions for a
  diffusion-convection equation.
\newblock {\em Arch. Rat. Mech. Anal.}, 124:43--65, 1993.

\bibitem{EVZ1}
M.~Escobedo, J.~Vazquez, and E.~Zuazua.
\newblock A diffusion-convection equation in several space dimensions equations
  in $r^n$.
\newblock {\em Indiana Univ. Math. J.}, 42:1413--1440, 1993.

\bibitem{EZ}
M.~Escobedo and E.~Zuazua.
\newblock Large time behavior for convection-diffusion equations in $r^n$.
\newblock {\em J. Funct. An.}, 100:119--161, 1991.

\bibitem{J}
T.~Jakubowski.
\newblock Fractional laplacian with singular drift.
\newblock {\em Studia Math.}, 207(3):257--273, 2011.

\bibitem{JS}
T.~Jakubowski and G.~Serafin.
\newblock Stable estimates for source solution of critical fractal burgers
  equation.
\newblock {\em http://arxiv.org/abs/1507.07695}, Preprint 2015.

\bibitem{JSz}
T.~Jakubowski and K.~Szczypkowski.
\newblock Estimates of gradient perturbation series.
\newblock {\em J. Math. Anal. Appl.}, 389(1):452--460, 2012.

\bibitem{KMX}
G.~Karch, C.~Miao, and X.~Xu.
\newblock On convergence of solutions of fractal {B}urgers equation toward
  rarefaction waves.
\newblock {\em SIAM J. Math. Anal.}, 39(5):1536--1549, 2008.

\bibitem{KNS}
A.~Kiselev, F.~Nazarov, and R.~Shterenberg.
\newblock Blow up and regularity for fractal {B}urgers equation.
\newblock {\em Dyn. Partial Differ. Equ.}, 5(3):211--240, 2008.

\bibitem{MM}
Y.~Maekawa and H.~Miura.
\newblock Upper bounds for fundamental solutions to non-local diffusion
  equations with divergence free drift.
\newblock {\em J. Funct. Anal.}, 264(10):2245--2268, 2013.

\bibitem{S}
L.~Silvestre.
\newblock H{\"o}lder estimates for advection fractional-diffusion equations.
\newblock {\em Ann. Sc. Norm. Super. Pisa Cl. Sci.}, 11(4):843--855, 2012.

\bibitem{Sz}
K.~Szczypkowski.
\newblock Gradient perturbations of the sum of two fractional {L}aplacians.
\newblock {\em Probab. Math. Statist.}, 32(1):41--46, 2012.

\bibitem{WW}
L.~Wang and W.~Wang.
\newblock Large-time behavior of periodic solutions to fractal {B}urgers
  equation with large initial data.
\newblock {\em Chin. Ann. Math. Ser. B}, 33(3):405--418, 2012.

\end{thebibliography}
\bibliographystyle{plain}

\end{document}